\newtheorem{theorem}{Theorem}
\newtheorem{remark}[theorem]{Remark}
\newtheorem{proposition}[theorem]{Proposition}
\newtheorem{corollary}[theorem]{Corollary}
\newtheorem{definition}[theorem]{Definition}
\renewcommand{\int}{\wr}
\newcommand{\sO}{\mathcal{O}}
\newcommand{\Z}{\mathbb{Z}}
\newcommand{\Kinv}{K^{\mathrm{inv}}}
\newcommand{\Kh}{\mathbb{K}}
\newcommand{\F}{\mathbb{F}}
\newcommand{\Khinv}{\mathbb{K}^{\mathrm{inv}}}
\newcommand{\Q}{\mathbb{Q}}
\newcommand{\C}{\mathcal{C}}
\newcommand{\D}{\mathcal{D}}
\renewcommand{\F}{{\mathbb F}}
\newcommand{\n}{\mathfrak{n}}
\newcommand{\Hom}{\mathrm{Hom}}
\newcommand{\Fun}{\mathrm{Fun}}
\renewcommand{\ker}{\mathrm{ker}}
\newcommand{\Sp}{\mathrm{Sp}}
\newcommand{\Sets}{\mathrm{Sets}}
\newcommand{\Ring}{\mathrm{Ring}^{\mathrm{nu,h}}}
\newcommand{\RingG}{\mathrm{Ring}^{\mathrm{nu,h,G}}}
\newcommand{\Spec}{\mathrm{Spec}}
\newcommand{\Proj}{\mathbb{P}\mathrm{roj}}
\newcommand{\id}{\mathrm{id}}
\begin{document}

\author{Niko Naumann, Charanya Ravi
\thanks{We thank Georg Tamme for useful conversation, and for catching a mistake in the proof of Proposition \ref{prop:geometric_rigidity}. Both authors were supported
through the SFB 1085, Higher Invariants, Regensburg.}}
\date{\today}
\title{Rigidity in equivariant algebraic $K$-theory}
\maketitle

\begin{abstract} If $(R,I)$ is a henselian pair with an action of a finite group
$G$ and $n\ge 1$ is an integer coprime to $|G|$ and such that $n\cdot |G|\in R^*$, then the reduction map of mod-$n$
equivariant $K$-theory spectra \[ K^G(R)/n\stackrel{\simeq}{\longrightarrow} K^G(R/I)/n\]
is an equivalence. We prove this by revisiting the recent proof of non-equivariant rigidity by Clausen, Mathew, and Morrow. 
\end{abstract}

\section{Introduction and statement of result}
Rigidity is a fundamental feature of algebraic $K$-theory with finite coefficients 
which was established by Suslin \cite{suslin1} for extensions of algebraically closed fields, and by
Gabber and Gillet-Thomason \cite{GT} for geometric henselian local rings. In \cite{gabber}, inspired by previous results of Suslin \cite{suslin2}
for henselian valuation rings of dimension one, Gabber proved a rigidity theorem for algebraic $K$-theory with finite coefficients for general henselian pairs:

\begin{theorem}[Gabber] If $(R,I)$ is a henselian pair and $n\ge 1$ is an integer such that
$n\in R^*$, then \[ K(R)/n\stackrel{\simeq}{\longrightarrow}K(R/I)/n\] is an equivalence.
\end{theorem}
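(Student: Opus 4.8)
The plan is to follow the trace-method proof of rigidity due to Clausen, Mathew, and Morrow. Throughout $K$ denotes connective $K$-theory, and we write $K(R,I)=\mathrm{fib}(K(R)\to K(R/I))$ for the relative term; since $K_{0}(R)\to K_{0}(R/I)$ is surjective for a henselian pair (idempotents, and hence finite projective modules, lift), $K(R,I)$ is connective. Because $\mathbb{S}/n\simeq\bigvee_{p\mid n}\mathbb{S}/p^{v_{p}(n)}$, and because a spectrum $X$ satisfies $X/p^{e}\simeq 0$ if and only if $X/p\simeq 0$, the assertion that $K(R)/n\to K(R/I)/n$ be an equivalence --- equivalently, that $K(R,I)/n\simeq 0$ --- amounts to $K(R,I)/p\simeq 0$ for every prime $p\mid n$. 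Each such $p$ is invertible in $R$, hence also in $R/I$, so we fix one and reduce to showing $K(R,I)/p\simeq 0$.

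Since the $p$-completion map $X\to X^{\wedge}_{p}$ is an $\mathbb{S}/p$-equivalence, one has $K(R,I)/p\simeq(K(R,I)^{\wedge}_{p})/p$, so it suffices to prove $K(R,I)^{\wedge}_{p}\simeq 0$. Here I would invoke the extension, due to Clausen--Mathew--Morrow, of the Dundas--Goodwillie--McCarthy theorem from nilpotent ideals to arbitrary henselian pairs: the cyclotomic trace induces an equivalence
\[
K(R,I)^{\wedge}_{p}\ \xrightarrow{\ \simeq\ }\ \mathrm{TC}(R,I;\mathbb{Z}_{p}):=\mathrm{fib}\bigl(\mathrm{TC}(R;\mathbb{Z}_{p})\to\mathrm{TC}(R/I;\mathbb{Z}_{p})\bigr).
\]
(One may, if desired, first reduce to the Noetherian case by continuity, since $K$ and $\mathrm{TC}$ commute with filtered colimits and every henselian pair is a filtered colimit of henselizations of pairs of finite type over $\mathbb{Z}$; but the cited theorem applies directly.)

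It remains to see that $\mathrm{TC}(A;\mathbb{Z}_{p})\simeq 0$ for any commutative ring $A$ in which $p$ is invertible: applying this to $A=R$ and to $A=R/I$ gives $\mathrm{TC}(R,I;\mathbb{Z}_{p})\simeq 0$, whence $K(R,I)^{\wedge}_{p}\simeq 0$ and the proof is complete. For this vanishing, observe that $\mathrm{THH}(A)$ is canonically an $\mathbb{E}_{\infty}$-$A$-algebra --- for instance $\mathrm{THH}(A)\simeq A\otimes S^{1}$ as $\mathbb{E}_{\infty}$-rings, the $A$-algebra structure coming from $\ast\hookrightarrow S^{1}$ --- so the unit $p\in A^{*}$ acts invertibly on $\mathrm{THH}(A)$; thus $\mathrm{THH}(A)$ is a $\mathbb{Z}[1/p]$-module spectrum, and $p$ acts invertibly on everything built from it by limits and colimits --- in particular on $\mathrm{TC}^{-}(A)=\mathrm{THH}(A)^{hS^{1}}$, on $\mathrm{TP}(A)=\mathrm{THH}(A)^{tS^{1}}$, and hence on $\mathrm{TC}(A)$ --- while a $p$-complete spectrum on which $p$ acts invertibly vanishes. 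The single substantial ingredient is thus the Clausen--Mathew--Morrow generalization of Dundas--Goodwillie--McCarthy to henselian pairs; the remainder is formal, and it is precisely this step that must be carried out equivariantly to obtain the theorem announced in the abstract.
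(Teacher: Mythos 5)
The paper does not prove Gabber's theorem itself; it is stated as Theorem 1 and attributed to \cite{gabber}, which establishes it by quite different (more classical) means. Your proof instead derives Gabber's theorem as a formal consequence of the later Clausen--Mathew--Morrow result (Theorem A of \cite{CMM}, quoted as Theorem 2 in the paper): you observe that $K(R,I)/p \simeq TC(R,I)/p$ for henselian pairs, and that $TC(A)/p = 0$ whenever $p \in A^{*}$ since $THH(A)$ is an $A$-module spectrum. This is correct, and it is in fact exactly the argument the paper runs in the equivariant setting: the proof of Corollary \ref{cor:main} deduces $K^G(R)/n \simeq K^G(R/I)/n$ from Theorem \ref{thm:main} by noting that ``the $TC$-term vanishes mod $n$'' when $n \in R^{*}$. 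So your proof of Theorem 1 is precisely the $G=\{e\}$ shadow of the paper's own passage from $\Kinv$-rigidity to $K$-rigidity.

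Two small remarks, neither of which affects correctness. First, the detour through $p$-completion is unnecessary: CMM's theorem already gives $K(R,I)/p \simeq TC(R,I)/p$ directly, and you then show the target is zero; stating it this way avoids having to worry about whether $TC(R,I;\mathbb{Z}_p)$ is bounded below. Second, you should be aware that this is a modern, post-hoc proof: historically Gabber's theorem predates and (through the Gillet--Thomason geometric case) feeds into the rigidity literature, whereas CMM reprove the statement from scratch using trace methods. Deducing Gabber from CMM is logically clean but inverts the historical dependency --- worth flagging if one cares about avoiding the appearance of circularity, though there is none in substance since CMM's proof does not invoke Gabber as a black box.
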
 \label{thm:Gabber}

In all these results, the coefficients are assumed to be coprime to the characteristic. In \cite{CMM}, the authors established the
most comprehensive rigidity statement to date addressing the case of coefficients not necessarily coprime to the characteristic. To formulate it, we denote by $\Kinv$ the fiber 
of the cyclotomic trace $K\longrightarrow TC$. Then their result \cite[Theorem A]{CMM}
reads

\begin{theorem}[Clausen, Mathew, Morrow] If (R,I) is a henselian pair and $n\ge 1$ is an integer, then the reduction map \[ \Kinv(R)/n\stackrel{\simeq}{\longrightarrow} \Kinv(R/I)/n\] is an equivalence.
\end{theorem}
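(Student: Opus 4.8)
The plan is to reduce the statement to the structural properties of $\Kinv=\mathrm{fib}(K\to TC)$: by Dundas--Goodwillie--McCarthy it is nil-invariant, and by Land--Tamme it is even a \emph{truncating} invariant, hence additionally satisfies excision and cdh descent. Write $K(R,I):=\mathrm{fib}(K(R)\to K(R/I))$ and likewise $TC(R,I)$ and $\Kinv(R,I)$; the assertion is equivalent to $\Kinv(R,I)/n\simeq 0$. Since $\mathbb{S}/n\simeq\prod_{p\mid n}\mathbb{S}/p^{v_{p}(n)}$ and $X/p^{k}\simeq X^{\wedge}_{p}/p^{k}$ for any spectrum $X$, it suffices to show $\Kinv(R,I)^{\wedge}_{p}\simeq 0$ for each prime $p$ dividing $n$. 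Excision forces $\Kinv(R,I)$ to depend only on the non-unital ring $I$, so one may replace $(R,I)$ by any henselian pair with the same ideal; writing an arbitrary henselian pair as a filtered colimit of henselizations of finite-type $\Z$-algebra pairs, and using that $K$, $THH$ and $TC$ commute with filtered colimits of rings, one reduces to $R$ Noetherian (indeed excellent).

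If $p\in R^{*}$ the assertion is easy and recovers Gabber's theorem. Since $THH(R)$ is an algebra over $R$ and $p\in R^{*}$, multiplication by $p$ is invertible on $THH(R)$, so $THH(R;\Z_p)=0$; because $TC(-;\Z_p)$ is built from $THH(-)$ by limits, colimits and Tate constructions (Nikolaus--Scholze), this forces $TC(R;\Z_p)=0$, and likewise for $R/I$. Hence $\Kinv(R,I)^{\wedge}_{p}=K(R,I)^{\wedge}_{p}$, which vanishes by Gabber's theorem applied to $K(-)/p^{k}$ for all $k$ and passing to the inverse limit.

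The essential case is $p\notin R^{*}$, where none of $K$, $THH$, $TC$ is individually rigid. Here I would establish, and then exploit, the \emph{continuity} of $\Kinv(-;\Z_p)$ along the $I$-adic tower of a henselian pair: $\Kinv(R;\Z_p)\xrightarrow{\ \sim\ }\lim_{m}\Kinv(R/I^{m};\Z_p)$. Granting this, nil-invariance of $\Kinv$ identifies each $\Kinv(R/I^{m};\Z_p)$ with $\Kinv(R/I;\Z_p)$, since $I/I^{m}$ is nilpotent in $R/I^{m}$, so the pro-object on the right is constant and $\Kinv(R;\Z_p)\simeq\Kinv(R/I;\Z_p)$, i.e.\ $\Kinv(R,I)^{\wedge}_{p}=0$. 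To obtain the continuity of $\Kinv(-;\Z_p)$ I would first prove the corresponding continuity of $TC(-;\Z_p)$: over a Noetherian base this reduces to a $p$-adic \emph{pro-vanishing} of relative topological Hochschild homology, namely that the relative terms $THH(R,I^{m};\Z_p)$ become uniformly highly connected along the tower, and this propagates to $TC^{-}=THH^{hS^{1}}$, $TP=THH^{tS^{1}}$ and to $TC(-;\Z_p)$ through the Nikolaus--Scholze fiber sequence; passing to the $I$-adic completion $\widehat R$ is harmless here, since $R/I^{m}\cong\widehat R/\widehat I^{\,m}$ for every $m$. The same pro-vanishing input transports along the cyclotomic trace --- the Dundas--Goodwillie--McCarthy analysis expresses relative $\Kinv$ through relative $THH$-type layers --- to give the continuity of $\Kinv(-;\Z_p)$ used above.

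\textbf{The main obstacle} is exactly this continuity of $TC(-;\Z_p)$ in the residue-characteristic case --- the $p$-adic pro-vanishing of relative $THH$ along the $I$-adic tower of a complete Noetherian ring --- which requires a firm grip on the pertinent cotangent complexes (equivalently, the Nygaard filtrations) $I$-adically, and where essentially all the genuine work resides. The remaining reductions, from an arbitrary henselian pair to the Noetherian and then $I$-adically complete situation, are comparatively routine: they use the filtered-colimit argument of the first paragraph together with the identifications $R/I^{m}\cong\widehat R/\widehat I^{\,m}$.
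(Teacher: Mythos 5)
Your sketch departs from the Clausen--Mathew--Morrow argument in an essential way, and the departure opens a gap that your own reductions cannot close. CMM do not deduce rigidity from continuity along the $I$-adic tower; on the contrary, $p$-adic continuity of $K$ and $TC$ (their Theorem~B) is established \emph{using} rigidity. Their proof of Theorem~A instead runs through a finiteness property --- pseudocoherence of the functors $I\mapsto K(\Z\ltimes I)$ and $I\mapsto TC(\Z\ltimes I)/p$ on the category of nonunital henselian rings --- together with a direct verification of rigidity in the geometric case of henselizations of smooth algebras over a prime field, and a bootstrapping argument exploiting that pseudocoherent functors are left Kan extended from compact projectives and that these compact projectives admit a ``multiplication by $N$'' endomorphism. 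This is precisely the structure the present paper replays equivariantly in Sections~2--5.

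The concrete gap in your plan is the step where you ``pass to the $I$-adic completion $\widehat{R}$.'' After the (correct) reduction to a Noetherian henselian pair $(R,I)$, the pro-vanishing of relative $THH$ and the resulting continuity of $TC(-;\Zp)$ are available for $I$-adically \emph{complete} rings, but a henselian pair is generally far from complete (compare the henselization of $\Z[x]$ at $(x)$ with $\Z[[x]]$). To make your argument run you therefore need the comparison $\Kinv(R;\Zp)\simeq\Kinv(\widehat{R};\Zp)$. But $(R,I)$ and $(\widehat{R},\widehat{I})$ are two henselian pairs with the same quotient $R/I$, so this comparison is itself a rigidity statement --- indeed, granting your continuity step for the complete ring, it is \emph{equivalent} to the theorem being proved. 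Excision does not rescue this: $I\to\widehat{I}$ is not an isomorphism of nonunital rings, so the Land--Tamme reduction gives you nothing here. Absent an independent argument for $\Kinv(R;\Zp)\simeq\Kinv(\widehat{R};\Zp)$, the plan is circular. (The preliminary reductions --- to a single prime $p$, to the nonunital ring $I$ via excision, to Noetherian $R$ via filtered colimits --- and the easy case $p\in R^*$ are all fine.)
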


The purpose of the present note is to generalize this result to an equivariant situation
for an action of a finite, abstract group $G$.

Given a commutative ring $R$ with an action of $G$, there is associated the twisted group ring
$R\int G$, see Section \ref{sec:pscoh} for a reminder. Our main theorem is

\begin{theorem}\label{thm:main}
If the finite group $G$ acts on the henselian pair $(R,I)$, $|G|\in R^*$, and $n\ge 1$ is an integer coprime to $|G|$, then the reduction map
\[ \Kinv(R\int G)/n\stackrel{\simeq}{\longrightarrow} \Kinv((R/I)\int G)/n\]
is an equivalence.
\end{theorem}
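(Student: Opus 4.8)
The plan is to revisit the proof of the non-equivariant theorem of Clausen--Mathew--Morrow, carrying the $G$-action along throughout, and to invoke the hypotheses $|G|\in R^{*}$ and $\gcd(n,|G|)=1$ only at a single geometric input. The formal part of \cite{CMM} survives unchanged: $\Kinv$ is the fiber of the cyclotomic trace, hence a finitary localizing invariant defined on arbitrary (not necessarily commutative) rings, and by \cite{CMM} it is truncating and therefore --- by the Land--Tamme excision theorem --- satisfies excision and is nil-invariant. Moreover $(R\int G,\,I\int G)$ is a henselian pair: $R\int G$ is finite free as an $R$-module, so $I\int G=I\cdot(R\int G)$ lies in the Jacobson radical of $R\int G$, and the henselian lifting property for finite $R\int G$-algebras reduces to that for finite $R$-algebras. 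Thus the only real question is whether the bootstrap of \cite{CMM}, which deduces rigidity for a general henselian pair from a ``geometric'' case, can be run with the twisted group ring in place of a commutative ring.

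First I would establish the geometric case, i.e.\ the equivariant analogue of Proposition~\ref{prop:geometric_rigidity}: rigidity for $\Kinv\big((-)\int G\big)/n$ along $G$-equivariant henselian pairs that are ind-smooth over a base in which $|G|$ is invertible. For the $K$-theory part this should follow from homotopy invariance combined with the classical rigidity of equivariant algebraic $K$-theory with finite coefficients coprime to $|G|$ (the equivariant form of the rigidity theorems of Suslin, Gabber, and Gillet--Thomason, cf.\ Yagunov--{\O}stv{\ae}r): coprimality $\gcd(n,|G|)=1$ is what makes mod-$n$ equivariant $K$-theory deformable, and $|G|\in R^{*}$ is needed to split the $G$-action by averaging; the $TC$-part, and hence all of $\Kinv$, then follows by the same continuity/deformation arguments as in the non-equivariant proof.

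Granting this, I would port the bootstrap of \cite{CMM} step by step to the $G$-equivariant setting. The point that makes this bookkeeping rather than new mathematics is that $R\int G$ is finite free over $R$ in a way compatible with every construction in the non-equivariant proof (approximation of a henselian pair by geometric ones, reduction to square-zero extensions and the cotangent complex, and the filtered-colimit and excision manipulations): each applies to $(R,I)$ with its $G$-action and then to $R\int G$ by module-finite base change, while truncatedness of $\Kinv$ neutralizes precisely the nilpotent and non-excisive phenomena that would otherwise obstruct the passage. Assembling these steps yields $\Kinv(R\int G)/n\stackrel{\simeq}{\longrightarrow}\Kinv((R/I)\int G)/n$.

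The hard part is the geometric case. One must upgrade classical equivariant rigidity with coefficients coprime to $|G|$ to a statement about the fiber of the cyclotomic trace, and do so functorially enough in the $G$-equivariant smooth algebra to feed the bootstrap; the delicacy is that $\Kinv(R)$ carries only a naive $G$-action whereas $R\int G$ records the semilinear structure, so the comparison genuinely uses both $|G|\in R^{*}$ and $\gcd(n,|G|)=1$ --- hypotheses absent from the non-equivariant \cite{CMM}. (The footnote's reference to a corrected mistake in the proof of Proposition~\ref{prop:geometric_rigidity} is consistent with this being the crux.)
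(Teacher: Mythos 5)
Your high-level strategy --- a geometric special case plus the CMM bootstrap carried equivariantly --- is the same as the paper's, but both pieces have genuine gaps.

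For the geometric case, you propose to split $\Kinv$ into its $K$- and $TC$-parts, treat $K/n$ by classical equivariant rigidity in the style of Yagunov--{\O}stv{\ae}r, and dispose of $TC$ by ``continuity/deformation.'' This separation cannot work as stated: neither $K$ nor $TC$ individually satisfies rigidity with coefficients that are not coprime to the characteristic, and CMM prove rigidity for the fiber $\Kinv$ directly, not by assembling separate statements for $K$ and $TC$; there is no ``deformation argument for $TC$'' to borrow. The paper's Proposition~\ref{prop:geometric_rigidity} goes a different route: it applies the Tabuada--Van den Bergh decomposition (valid precisely because $p\nmid|G|$ and $|G|\in k^*$), which identifies $\Kinv([\Spec(\sO^h_{X,x})/G])/p$ with the $G$-fixed points of a direct sum of functorial summands of $\Kinv\bigl(\Spec(\sO^h_{X^\sigma,x})\times\Spec(k[\sigma])\bigr)/p$ over cyclic subgroups $\sigma\subseteq G$. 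Each of these is a \emph{non-equivariant} henselian situation to which CMM's Theorem~A applies verbatim. That reduction to the non-equivariant theorem is the key idea missing from your sketch, and it is exactly where $\gcd(p,|G|)=1$ and $|G|\in k^*$ enter.

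For the bootstrap, ``module-finite base change'' and ``bookkeeping'' substantially understate the work. The $G$-projective pseudocoherence results (Propositions~\ref{prop:K_G_pscoh} and \ref{prop:TC_G_pscoh}) require identifying the compact projective generators of $\RingG_R$, a uniform stable-range bound for $(\Z\ltimes I)\int G$ (via the radical ideal $I\int G$ and Bass's theorem for the quotient $\Z[G]$), and --- critically --- the fact that $GL_n(\Z[G])$ is an arithmetic subgroup of a reductive $\Q$-group whose torsion-free finite-index subgroups have finite classifying spaces, by Borel--Serre. None of this follows by base change from the non-equivariant case. Similarly, the equivariant $K_0$-rigidity (Proposition~\ref{prop:eq_rig_K0}) needed to compare connective and non-connective $K$-theory across the Milnor square is a delicate lifting argument via Greco's theorem, made subtle by the fact that $R\int G$ is only an $R^G$-algebra and $R^G\subseteq R$ need not be finite; the paper has to descend to finite type, pass to $G$-invariants, and use the compatibility of henselization with integral base change. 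Your proposal elides all of this, so while the overall architecture is right, the proof as written would not go through without the decomposition theorem and the finiteness inputs just described.
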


The more traditional invariant in equivariant algebraic $K$-theory is the spectrum $K^G(R)$, defined to be the connective $K$-theory of the exact category of finitely generated projective $R$-modules together with a semi-linear $G$-action. We deduce the next result about this.

\begin{corollary}\label{cor:main} Assume in the situation of Theorem \ref{thm:main} that in addition
$n\in R^*$ holds. Then the reduction map
\[ K^G(R)/n\stackrel{\simeq}{\longrightarrow} K^G(R/I)/n\]
is an equivalence.
\end{corollary}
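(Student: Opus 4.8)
\emph{Proof sketch.} The plan is to pass to the twisted group ring, where Theorem~\ref{thm:main} is directly available, and to absorb the difference between $K$ and $\Kinv$ into the vanishing of topological cyclic homology with invertible coefficients.

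First I would invoke the comparison with the twisted group ring from Section~\ref{sec:pscoh}. Since $|G|\in R^*$, a Maschke-type averaging argument identifies the exact category of finitely generated projective $R$-modules with semi-linear $G$-action with the exact category of finitely generated projective left $R\int G$-modules; hence there is a natural equivalence of connective spectra $K^G(R)\simeq\tau_{\ge 0}K(R\int G)$, where $K$ denotes the nonconnective $K$-theory entering the definition of $\Kinv$, and likewise $K^G(R/I)\simeq\tau_{\ge 0}K((R/I)\int G)$ via the identification $(R\int G)/(I\int G)\cong(R/I)\int G$.

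Next I would observe that $TC$ contributes nothing mod $n$. As $n\in R^*$, the integer $n$ is a unit in both $A:=R\int G$ and $A':=(R/I)\int G$. For any associative ring spectrum $A$ in which $n$ is a unit of $\pi_0$, multiplication by $n$ is an equivalence on every $A$-module, so the cyclic bar construction presents $THH(A)$ as a colimit of $A$-modules $M$ with $M/n\simeq 0$; therefore $THH(A)/n\simeq 0$, and since $TC^-(A)$, $TP(A)$ and $TC(A)$ are built from $THH(A)$ by limits and colimits, they are $\mathbb{S}[1/n]$-modules as well, so $TC(A)/n\simeq 0$. Applying this to $A$ and $A'$ and feeding it into the defining fiber sequence $\Kinv\to K\to TC$ gives equivalences $\Kinv(A)/n\xrightarrow{\ \simeq\ }K(A)/n$ and $\Kinv(A')/n\xrightarrow{\ \simeq\ }K(A')/n$, compatibly with the reduction maps. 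By Theorem~\ref{thm:main} the reduction map on $\Kinv$ is an equivalence mod $n$, hence so is $K(R\int G)/n\to K((R/I)\int G)/n$.

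It remains to pass from nonconnective to connective $K$-theory. Using the cofiber sequences $\tau_{\ge 0}K\to K\to\tau_{\le -1}K$, it suffices to check that $\tau_{\le -1}K(R\int G)/n\to\tau_{\le -1}K((R/I)\int G)/n$ is an equivalence; this one verifies on homotopy groups, using that the mod-$n$ equivalence on nonconnective $K$-theory is, by naturality of the Bockstein, compatible with the canonical two-step filtration of $\pi_*(-/n)$ with subquotients $K_*/n$ and the $n$-torsion of $K_{*-1}$, and hence induces isomorphisms on the negative $K$-groups mod $n$ and on their $n$-torsion. This yields $K^G(R)/n\xrightarrow{\ \simeq\ }K^G(R/I)/n$. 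I do not expect a genuine obstacle: all of the difficulty sits in Theorem~\ref{thm:main}, and the only points needing care are the identification $K^G(-)\simeq K(-\int G)$ — which is exactly where the hypothesis $|G|\in R^*$ is used beyond coprimality of $n$ with $|G|$ — and the routine bookkeeping in this last step.
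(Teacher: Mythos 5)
Your proof follows the same two-step strategy as the paper: identify $TC/n\simeq 0$ from $n\in R^*$ so that $\Kinv/n\simeq K/n$, and use $|G|\in R^*$ to identify finitely generated projective $R\int G$-modules with finitely generated projective $R$-modules carrying a semi-linear $G$-action, giving $K(R\int G)\simeq K^G(R)$. Up to there the argument matches the paper.

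The discrepancy is in your final paragraph. The paper's convention (stated right before Proposition~\ref{prop:K_0}) is that $K(A)$ is the \emph{connective} $K$-theory spectrum, and $\Kinv$ is defined as the fiber of the cyclotomic trace on this connective $K$; the nonconnective versions are written $\Kh$ and $\Khinv$. Theorem~\ref{thm:main} is stated for $\Kinv$, hence already for connective $K$-theory, so $\Kinv(R\int G)/n\simeq K(R\int G)/n = K^G(R)/n$ and there is no further truncation step to perform. Your passage from nonconnective to connective $K$-theory is therefore unnecessary — and as written it contains a genuine gap. From $\Kh(A)/n\simeq\Kh(A')/n$ one gets a map of Bockstein short exact sequences which is an isomorphism on the middle terms $\pi_*(\Kh(-)/n)$; this only yields injectivity on $\Kh_*/n$ and surjectivity on ${}_n\Kh_{*-1}$, not isomorphisms on both subquotients. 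To make that step correct one would have to invoke the specific structure of $K$-theory of twisted group rings along henselian pairs: $K_0(R\int G)\to K_0((R/I)\int G)$ is an isomorphism on the nose by Proposition~\ref{prop:eq_rig_K0}, and the negative $K$-groups agree integrally by Bass's excision theorem for the Milnor square associated to $(R,I)$ — exactly the content packaged in Proposition~\ref{prop:pull_back}. Citing those facts would repair the last step, but simply adopting the paper's convention makes it superfluous.
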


\begin{proof} Since $n\in R^*$, the $TC$-term in the definition of $\Kinv(R\int G)$ vanishes mod $n$, i.e.
$\Kinv(R\int G)/n\simeq K(R\int G)/n$. Since $|G|\in R^*$, a finitely generated projective left 
$R\int G$-modules is the same thing as a finitely generated projective $R$-module with a semi-linear
$G$-action, hence $K(R\int G)\simeq K^G(R)$; and similarly with $R$ replaced with $R/I$.
\end{proof}

\begin{remark} The appearance of $R\int G$ might seem a bit spurious since all our results
assume $|G|\in R^*$ which forces $K(R\int G)\simeq K^G(R)$ (and similarly for $TC$).
Since however several of the intermediate results work without assuming that $|G|\in R^*$,
we decided to phrase things in terms of $R\int G$.
\end{remark}

Corollary \ref{cor:main} is a generalization of Theorem \ref{thm:Gabber} for equivariant algebraic $K$-theory.
Rigidity results for equivariant algebraic $K$-theory have been previously studied for henselian local rings with trivial group actions
(but for more general algebraic groups) in \cite{YO} and \cite{krishna} and in \cite{YO} and \cite{tabuada} for extensions of algebraically closed fields
and extensions of separably closed fields, respectively. In \cite{HRO}, Corollary \ref{cor:main} was proved in the geometric case, assuming that $G$ is abelian and that $k$ contains $|G|$-th roots of unity.

The proofs of our results are direct generalizations of those of \cite{CMM}.
We made an effort to make this paper reasonably self-contained, which results in repeating some arguments from \cite{CMM}.\\
We conclude the introduction with an overview of the sections.
In Section \ref{sec:pscoh}, we establish the equivariant generalization of the key finiteness property,
called pseudocoherence, isolated in \cite{CMM}. This allows to generalize equivariant rigidity from 
certain nice geometric situations to general henselian pairs. In Section \ref{sec:special_case}
we establish a sufficient supply of equivariant rigidity in nice situations (see Proposition \ref{prop:geometric_rigidity})
by combining the non-equivariant result with decomposition results of Vistoli and 
Tabuada-Van den Bergh. Section \ref{sec:technicalities} collects further technical results. 
The final Section \ref{sec:proof_of_main} assembles the pieces into a proof of Theorem \ref{thm:main}.

 \section{$G$-projective pseudocoherence}\label{sec:pscoh}

The aim of this section is to establish the equivariant generalizations of the finiteness properties
\cite[Propositions 4.21, 4.25]{CMM} of algebraic $K$-theory and of topological cyclic homology with finite coefficients.
Fix a finite group $G$ throughout.

Let $R$ be a commutative ring and $I \subset R$ an ideal.
Recall that the pair $(R,I)$ is called a henselian pair if for every $f(t) \in R[t]$, $\bar{a} \in R/I$, such that $\bar{a}$ is a simple root of $\bar{f}(t) \in (R/I)[t]$, there exists $a \in R$ such that $a \mapsto \bar{a}$ and $f(a) = 0$.
By a result of Gabber \cite[Corollary 1]{gabber}, the property of $(R,I)$ being a henselian pair depends only on the ideal $I$, regarded as a nonunital ring, and not on $R$. 
We now briefly recall the definition of nonunital henselian algebras. For a detailed discussion see \cite[Section 3]{CMM}.

For a commutative ring $R$, a nonunital $R$-algebra is an $R$-module $I$ endowed with a multiplication $I \otimes_R I \to I$
which is associative and commutative. A non-unital $R$-algebra $I$ is said to be henselian if 
for every $n \geq 0$ and every $g(t) \in I[t]$ of degree at most $n$, the polynomial $$f(t) = t (1+t)^n + g(t)$$
has a (necessarily unique) root in $I$.
Let $\Ring_R$ denote the category of non-unital, henselian $R$-algebras.

\begin{definition}
We denote by $\RingG_R$ the category of $G$-objects in $\Ring_R$.
\end{definition}
To ease the notation, we will abbreviate $\RingG:=\RingG_\Z$.

It is observed in \cite[Remark 3.10]{CMM} that the category $\Ring_R$ is bi-complete, and that the forgetful functor \[R:\Ring_R\longrightarrow\Sets\] to sets
is a conservative right adjoint which commutes with sifted colimits.\footnote{equivalently, as the categories are discrete, it
commutes with filtered colimits and split co-equalizers.} 

Denoting by \[F_R:\Sets\longrightarrow\Ring_R\] its left-adjoint, this is remarked to imply that the subcategory $\left(\Ring_R\right)_\Sigma\subseteq\Ring_R$ of compact projective objects is the idempotent 
completion of the full subcategory spanned by the free objects \[ F_R(n):=F_R(\{ 1, \ldots, n\})\,\, (n\ge 0).\]
Moreover, $F_R(n)$ is 
identified in \cite[Example 3.9]{CMM} as the ideal generated by the variables $X_1,\ldots, X_n$ in the $R$-algebra given by the henselization of 
$R[X_1,\ldots, X_n]$ along the ideal $(X_1,\ldots, X_n)$.

This generalizes to the equivariant setting as follows: The category $\RingG$ is bi-complete and the forgetful functor 
\[R':\RingG_R\longrightarrow\Ring_R\]
is a conservative right-adjoint which commutes with all colimits. This is clear by thinking of
$\RingG$ as the category of presheaves on $G$ with values in $\Ring$. Consequently, denoting the left-adjoint of $R'$ by
\[F'_R:\Ring_R\longrightarrow\RingG_R,\] and by $F''_R:=F'_R\circ F_R$, the subcategory $\left(\RingG_R\right)_\Sigma\subseteq\RingG_R$ of compact projective objects is the idempotent 
completion of the full subcategory spanned by the free objects $F''_R(n):=F''_R(\{ 1, \ldots, n\})$ $(n\ge 0)$.
These be identified explicitly:

\begin{proposition}\label{prop:equivariant_generators}
For every $n\ge 0$, $F''_R(n)$ is the ideal generated by the variables $X_{\sigma,i}$ $(\sigma\in G, 1\leq i\leq n)$  in the $R$-algebra given by the henselization of the polynomial $R$-algebra $R[X_{\sigma,i} \mid \sigma\in G, 1\leq i\leq n]$ along the ideal $(X_{\sigma,i})$, and $G$-action determined by $\sigma(x_{\tau,i})=x_{\sigma\tau,i}$.
\end{proposition}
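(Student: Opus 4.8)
The plan is to identify $F''_R(n) = F'_R(F_R(n))$ by computing the left adjoint $F'_R : \Ring_R \to \RingG_R$ explicitly and then feeding in the known description of $F_R(n)$ from \cite[Example 3.9]{CMM}. Since $\RingG_R$ is the category of functors $G \to \Ring_R$ (with $G$ viewed as a one-object groupoid), the left adjoint to the forgetful/evaluation-at-the-object functor $R'$ is the "induction" functor: on underlying objects it sends a non-unital henselian $R$-algebra $I$ to the coproduct $\coprod_{\sigma \in G} I$ in $\Ring_R$, with $G$ permuting the copies by left translation. So the first step is to record this standard adjunction description of $F'_R$ and reduce the proposition to computing a $|G|$-fold coproduct in $\Ring_R$ of copies of $F_R(n)$.

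The second step is to compute coproducts in $\Ring_R$. Since $\Ring_R$ has compact projective generators $F_R(m)$ and $F_R$ is a left adjoint, $F_R$ preserves coproducts, so $\coprod_{\sigma \in G} F_R(n) \cong F_R(\coprod_{\sigma \in G}\{1,\dots,n\}) = F_R(G \times \{1,\dots,n\})$, the free non-unital henselian $R$-algebra on the set of symbols $X_{\sigma,i}$. By \cite[Example 3.9]{CMM}, this is precisely the ideal $(X_{\sigma,i})$ inside the henselization of $R[X_{\sigma,i} \mid \sigma \in G, 1 \le i \le n]$ along that ideal. This immediately gives the claimed underlying $R$-algebra.

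The third step is to check that the $G$-action coming from $F'_R$ matches the stated formula $\sigma(x_{\tau,i}) = x_{\sigma\tau,i}$. Here I would trace through the two identifications above: the $G$-action on $\coprod_\sigma F_R(n)$ is by permutation of the coproduct factors (left translation on the indexing set $G$), and under the isomorphism with $F_R(G \times \{1,\dots,n\})$ this becomes the action induced by left translation on the first coordinate of the generating set, i.e. $\tau \cdot X_{\rho,i} = X_{\tau\rho,i}$. Since the henselization and the passage to the ideal generated by the variables are functorial, this $G$-action on the generators extends uniquely to the asserted $G$-action on $F''_R(n)$. One should also remark that this is indeed an action by $R$-algebra automorphisms (the $G$-action on $R$ itself being relevant only in the statement of Theorem \ref{thm:main}; in the free construction over $R$ the action fixes $R$).

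The main obstacle I anticipate is purely bookkeeping: making sure the direction of the translation ($\sigma\tau$ versus $\tau\sigma$, and left versus right adjoint conventions) is consistent with how $\RingG_R$ was set up as presheaves on $G$, and that the coproduct $\coprod_\sigma F_R(n)$ really is the underlying object of $F'_R(F_R(n))$ with the permutation action — i.e. that the "free $G$-object" functor on a (co)complete category is induction by coproducts. There is no hard mathematics here; the only subtlety is that all the intermediate functors ($F_R$, $F'_R$, henselization, "ideal generated by the variables") are left adjoints or otherwise colimit-preserving, so everything can be assembled formally once the coproduct in $\Ring_R$ is understood via the generators $F_R(m)$.
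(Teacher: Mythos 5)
Your proof is correct and takes essentially the same approach as the paper: both arguments are purely formal consequences of everything in sight being a left adjoint. The only cosmetic difference is that you compute $F'_R$ explicitly as the $G$-indexed coproduct (induction) and then invoke that $F_R$ preserves coproducts, whereas the paper first strips off the henselization (since henselization is a left adjoint), passes to augmented $R$-algebras, and checks the universal property of the polynomial $G$-algebra directly; both routes reduce the claim to the same elementary verification.
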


Said a bit more invariantly, $F''_R(n)$ is the henselization along the origin of 
the affine $R$-space afforded by the direct sum of $n$ copies of the regular representation of $G$ over $R$.

\begin{proof}[Proof of Proposition \ref{prop:equivariant_generators}]
Since henselization is a left-adjoint, it suffices to see the analogous statement before 
henselization. Then using the equivalence between non-unital $R$-algebras and augmented
$R$-algebras, the claim follows because the augmented $R$-algebra with $G$-action
$R[X_{\sigma,i} \mid \sigma\in G, 1\leq i\leq n]$ has the required mapping property.
\end{proof}

For every $N\ge 1$, we will denote by \[ [N]:F''_R(n)\longrightarrow F''_R(n)\]
the ``multiplication-by-$N$-map", namely the unique map in $\RingG_R$ which, under 
the identification of Proposition \ref{prop:equivariant_generators}, maps every $X_{\sigma,i}$ to
$NX_{\sigma,i}$.

\begin{proposition}\label{prop:equivarint_gabber_colim}
For fixed $M\ge 1$ and $n\ge 0$, we have an isomorphism in $\RingG_{\Z\left[ \frac{1}{M}\right]}$
\[ \mathrm{colim}_{(N,M)=1} F''_{\Z\left[ \frac{1}{M}\right]}(n)\simeq F''_\mathbb{Q}(n),\]
the (filtered) colimit being taken along the multiplication maps $[N]$ for all $N$
coprime to $M$, partially ordered by divisibility.
\end{proposition}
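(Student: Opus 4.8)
The plan is to reduce Proposition~\ref{prop:equivarint_gabber_colim} to its case for the trivial group --- which is the corresponding non-equivariant colimit identification, available from \cite{CMM} --- by a formal argument using that the forgetful functor $R'$ is conservative and preserves colimits.

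First I would unwind what $R'$ does to the situation. By Proposition~\ref{prop:equivariant_generators} together with the invariant description following it, $R'(F''_R(n))$ is the free non-unital henselian $R$-algebra $F_R(|G|\cdot n)$ on the set $G\times\{1,\dots,n\}$, and under this identification $R'([N])$ is the multiplication-by-$N$ endomorphism of $F_R(|G|\cdot n)$. Since $R'$ preserves colimits, it carries the left-hand side of the asserted isomorphism to $\mathrm{colim}_{(N,M)=1}F_{\Z[1/M]}(|G|\cdot n)$, the colimit along the multiplication maps. To produce a morphism to compare, for each $N$ coprime to $M$ I would let $g_N\colon F''_{\Z[1/M]}(n)\to F''_\Q(n)$ be the morphism of $\RingG_{\Z[1/M]}$ determined by $X_{\sigma,i}\mapsto N^{-1}X_{\sigma,i}$: this is well defined because the target is a $\Q$-algebra, it is $G$-equivariant because the formula is uniform in $\sigma\in G$, and one checks $g_{N'}\circ[N'/N]=g_N$ for $N\mid N'$, so the $g_N$ assemble to a morphism $g\colon\mathrm{colim}_{(N,M)=1}F''_{\Z[1/M]}(n)\to F''_\Q(n)$ in $\RingG_{\Z[1/M]}$. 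Applying $R'$ gives the corresponding non-equivariant comparison map $\mathrm{colim}_{(N,M)=1}F_{\Z[1/M]}(|G|\cdot n)\to F_\Q(|G|\cdot n)$ (again the rescalings $X_j\mapsto N^{-1}X_j$ at stage $N$), so by conservativity of $R'$ it suffices to know this map is an isomorphism.

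That non-equivariant statement is the trivial-group instance of the proposition, with $|G|\cdot n$ replaced by an arbitrary $m\ge 0$; I would either cite it from \cite{CMM} or reprove it as follows. Recall that $F_R(m)$ is the ideal $(X_1,\dots,X_m)$ in the henselization of $R[X_1,\dots,X_m]$ along it, with $[N]$ induced by $X_j\mapsto NX_j$, and that colimits in $\Ring_R$ are obtained by henselizing colimits of underlying non-unital $R$-algebras ($\Ring_R$ being reflective in non-unital $R$-algebras, with henselization as reflector; see \cite[Section~3]{CMM}, and \cite{gabber}). So the colimit in question is the henselization of $\mathrm{colim}_{(N,M)=1}\bigl((X_1,\dots,X_m)\,\Z[1/M][X_1,\dots,X_m]\bigr)$ along $X_j\mapsto NX_j$, and I would identify this colimit of non-unital algebras with $(X_1,\dots,X_m)\,\Q[X_1,\dots,X_m]$ by hand: the cocone sends a polynomial $p$ at stage $N$ to $p(X_1/N,\dots,X_m/N)$, which is injective, while surjectivity is clearing of denominators, possible precisely because every element has vanishing constant term, so that multiplying each variable by a suitable $N$ coprime to $M$ keeps one inside $\Z[1/M][X_1,\dots,X_m]$; henselizing back and using \cite{gabber} once more to recognize $F_\Q(m)$ finishes it. I expect this trivial-group case to be the only real obstacle: the subtlety is that the colimit must be computed by henselizing a colimit of non-unital algebras, and that the elementary argument succeeds on the augmentation ideal but not on the full polynomial ring --- which is exactly why the objects $F_R(m)$, rather than polynomial rings, are the correct things to take the colimit of.
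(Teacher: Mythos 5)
Your proof is correct and takes a genuinely different (and arguably cleaner) route than the paper's. The paper's proof is a one-liner: it asserts that the result is proved ``exactly as in the special case $M=1$, $G=\{e\}$'' (citing \cite[Corollary 3.20]{CMM}) and leaves the details to the reader, implicitly asking one to rerun Gabber's colimit computation with $|G|\cdot n$ variables while carrying along the $G$-action. You instead reduce the equivariant statement formally to the non-equivariant one: you construct the comparison map $g$ in $\RingG_{\Z[1/M]}$, observe that the forgetful functor $R'$ --- which the paper has already noted is conservative and colimit-preserving --- carries $g$ to the non-equivariant comparison map $\mathrm{colim}\,F_{\Z[1/M]}(|G|\cdot n)\to F_\Q(|G|\cdot n)$, and conclude by conservativity once that map is known to be an isomorphism. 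This isolates the non-formal content, namely Gabber's colimit identification, in the non-equivariant case, where you re-derive it by computing the colimit of the underlying free non-unital algebras (polynomial ideals) and applying the henselization reflector. The two approaches prove the same thing, but yours makes explicit that the passage from $G=\{e\}$ to general $G$ is purely categorical, whereas the paper leaves this to be verified. One small remark: since the colimit is filtered and the forgetful functor from henselian non-unital algebras preserves filtered colimits (as the paper notes), the filtered colimit of henselian objects is automatically henselian, so the final henselization step in your hand-computation is not strictly necessary --- though routing it through the reflector, as you do, is equally valid and perhaps tidier, since the pre-henselized polynomial ideals are easier to manipulate directly.
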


\begin{proof} This is proved exactly as in the special case $M=1, G=\{ e\}$ which is due to Gabber (see \cite[Corollary 3.20]{CMM}). We leave the details to the reader.
\end{proof}

Recall the twisted group ring (e.g. \cite[\S 28]{curtis_reiner}): If $R$ is a commutative ring with a (left) $G$-action, then 
the twisted group ring $R\int G$ is the finite free $R$-module on the set $\left\{e_\sigma\colon\sigma\in G\right\}$ 
with multiplication determined by $(re_\sigma)(r'e_\tau)=r\sigma(r')e_{\sigma\tau}$. This construction
is functorial in $R$.
It is rigged such that the datum of a left $R\int G$-module is equivalent to the datum of an $R$-module together with a {\em semi-}linear $G$-action. Observe that when the $G$-action on $R$
is trivial, this construction gives the usual group ring, i.e. $R\int G=R[G]$ in this case.

For an associative, unital ring $A$, we will denote by $K(A)$ the connective $K$-theory spectrum of 
the category of finitely generated projective left $A$-modules, cf. \cite{quillen}. Given any 
$I\in\RingG$, we denote by $\Z\ltimes I$ the ring with $G$-action obtained from $I$ by adjoining a unit 
(necessarily with trivial $G$-action). The augmentation $\Z\ltimes I\longrightarrow \Z$
is $G$-equivariant and thus induces an augmentation $p:(\Z\ltimes I)\int G\longrightarrow \Z[G]$.
We will need the following equivariant generalization of \cite[Lemma 4.20]{CMM}.

\begin{proposition}\label{prop:K_0} Given $I\in\RingG$, the map $p^*: K_0\left((\Z\ltimes I)\int G\right)\stackrel{\simeq}{\longrightarrow}K_0(\Z[G])$ is an isomorphism.
\end{proposition}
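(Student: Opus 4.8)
The plan is to reduce the statement to a classical fact about $K_0$ of ideals in (possibly noncommutative) rings, applied to the ideal $I\wr G\subset(\Z\ltimes I)\wr G$. First observe that as a left $(\Z\ltimes I)\wr G$-module, $(\Z\ltimes I)\wr G$ contains $I\wr G$ (the free $I$-submodule on the $e_\sigma$) as a two-sided ideal, and the quotient is precisely $\Z[G]$, with $p$ the quotient map. So the assertion is that $K_0$ is insensitive to this ideal. The standard tool here is: if $A$ is a ring and $J\subset A$ a two-sided ideal such that $J$ is idempotent-complete as a nonunital ring — more precisely, such that every idempotent matrix over $A/J$ lifts to an idempotent matrix over $A$, and every unit of $M_n(A/J)$ congruent to a liftable idempotent lifts appropriately — then $K_0(A)\to K_0(A/J)$ is injective; and if moreover idempotents lift, it is surjective. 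The relevant input is that $I$, being a nonunital \emph{henselian} $\Z$-algebra, has the requisite lifting properties. This is exactly how \cite[Lemma 4.20]{CMM} is proved in the non-equivariant case (where $J=I\subset\Z\ltimes I$), so the task is to check that the henselian lifting of idempotents survives passage to the twisted group ring.

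Concretely, I would argue as follows. Surjectivity of $p^*$: every finitely generated projective $\Z[G]$-module is a summand of a free module, hence corresponds to an idempotent $e\in M_n(\Z[G])$; it suffices to lift $e$ to an idempotent $\tilde e\in M_n((\Z\ltimes I)\wr G)$, and then $p^*[\mathrm{im}\,\tilde e]=[\mathrm{im}\,e]$. Now $M_n((\Z\ltimes I)\wr G)=M_n(\Z[G])\oplus M_n(I\wr G)$ as additive groups, with $M_n(I\wr G)$ a two-sided ideal whose underlying nonunital ring is built from $I$ (indeed $M_n(I\wr G)\cong M_{n|G|}(I)$ as nonunital rings, using the standard Morita-type identification of $M_k(I\wr G)$). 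Since $I$ is henselian, the nonunital ring $M_{n|G|}(I)$ is henselian as well (henselianness of a nonunital ring is stable under passing to matrix rings — this is part of the package in \cite[Section 3]{CMM}, or follows since $\Z\ltimes M_{n|G|}(I)$ and $M_{n|G|}(\Z\ltimes I)$ have the same idempotents up to the relevant equivalence and $(\,\cdot\,,\,\cdot\,)$ henselian pairs are stable under finite matrix extensions by Gabber). Hence any idempotent in $M_n(\Z[G])=M_n((\Z\ltimes I)\wr G)/M_n(I\wr G)$ lifts to an idempotent upstairs, giving surjectivity.

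For injectivity: suppose $P,Q$ are finitely generated projective $(\Z\ltimes I)\wr G$-modules with $p^*[P]=p^*[Q]$ in $K_0(\Z[G])$. After stabilizing we may assume $P=\mathrm{im}\,e$, $Q=\mathrm{im}\,f$ for idempotents $e,f\in M_n((\Z\ltimes I)\wr G)$ with $p(e),p(f)$ conjugate by some $g\in GL_{2n}(\Z[G])$ (enlarging $n$). Lifting $g$ to an invertible matrix over $(\Z\ltimes I)\wr G$ — possible because $GL_m((\Z\ltimes I)\wr G)\to GL_m(\Z[G])$ is surjective, again by henselianness: a matrix over $(\Z\ltimes I)\wr G$ reducing to an invertible matrix mod the ideal $M_m(I\wr G)$ is automatically invertible, since $M_m(I\wr G)$ lies in the Jacobson radical of $M_m((\Z\ltimes I)\wr G)$, as a henselian nonunital ideal is contained in the radical — we reduce to the case $p(e)=p(f)$, i.e. $e-f\in M_{2n}(I\wr G)$. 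Two idempotents congruent modulo a nil-like (radical) ideal over which idempotents lift uniquely-enough are conjugate by an element of $1+M_{2n}(I\wr G)$; this is the standard "idempotents close to each other are conjugate" lemma, valid because $M_{2n}(I\wr G)$ is a henselian nonunital ideal. Therefore $P\cong Q$ and $[P]=[Q]$ in $K_0$.

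The main obstacle is the bookkeeping in the previous paragraph: making precise the claim that the nonunital ring $I\wr G$ (and its matrix rings) is henselian, and that a henselian nonunital two-sided ideal lies in the Jacobson radical and has unique idempotent lifting. In the commutative-in-$I$ setting of \cite{CMM} this is all in place; here $I\wr G$ is genuinely noncommutative, so I would either (i) cite/adapt the noncommutative henselian-pair formalism — a pair $(A,J)$ with $J$ two-sided is henselian iff every idempotent of $A/J$ lifts and $J\subseteq\mathrm{rad}(A)$, which is stable under matrix rings and, crucially here, under $J\mapsto J\wr G$ when $A=\Z\ltimes I$ — or (ii) bypass it entirely by using the nonunital-ring characterization: henselianness of $(R,I)$ depends only on $I$ as a nonunital ring (Gabber, cited in the excerpt), and $I\wr G$ as a nonunital ring is Morita-equivalent-enough to $I$ that its unital completion forms a henselian pair. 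Option (ii) is the cleaner route and is morally what \cite[Lemma 4.20]{CMM} does; the work is just in transporting their argument across the twisted group ring, which is formal once one notes $(\Z\ltimes I)\wr G$ is finite free over $\Z\ltimes I$ and the ideal $I\wr G$ is correspondingly finite free over $I$.
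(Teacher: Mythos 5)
There is a genuine gap, and it is in exactly the place your proof flags as the ``main obstacle.'' The concrete error is the claimed identification $M_n(I\int G)\cong M_{n|G|}(I)$ as nonunital rings: the left side has underlying $I$-module of rank $n^2|G|$ while the right side has rank $n^2|G|^2$, so there is no such isomorphism, and there is no ``standard Morita-type identification'' available here (for a general $G$-action on $I$ the twisted group ring is not Morita trivial). Without this, the entire surjectivity argument collapses to the unproved assertion that $\bigl((\Z\ltimes I)\int G,\ I\int G\bigr)$ is a ``noncommutative henselian pair'' in a sense strong enough to lift idempotents in all matrix sizes. Your options (i) and (ii) restate this as a hoped-for fact rather than proving it, and the properties you invoke (henselianity stable under $J\mapsto J\int G$, or $I\int G$ ``Morita-equivalent-enough'' to $I$) do not hold or are not available. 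Note also that idempotent lifting modulo a radical ideal is not in general stable under passage to matrix rings, so even the weaker claim needs an argument.

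The structural point your proposal misses is precisely the one the paper flags in a footnote: $(\Z\ltimes I)\int G$ is not a central $(\Z\ltimes I)$-algebra (only a $(\Z\ltimes I)^G$-algebra), and $(\Z\ltimes I)^G\subseteq\Z\ltimes I$ need not be finite, so one cannot directly cite a lifting theorem for algebras finite over a commutative henselian pair. The paper's proof of the more general Proposition~\ref{prop:eq_rig_K0} resolves this by (a) descending to $G$-stable finitely generated subrings $R_\alpha\subseteq R$, where $R_\alpha^G\subseteq R_\alpha$ \emph{is} finite, (b) controlling $I_\alpha$ versus $I_\alpha^G\cdot R_\alpha$ via the norm relation $\prod_{g\in G}(x-g(x))=0$ to reduce to a nilpotent thickening, (c) using \cite[TAG 0DYE]{stacks-project} to base-change henselization from $R_\alpha^G$ to $R_\alpha$, and finally (d) applying Greco's lifting theorem \cite[Theorem 4.1]{greco} to the commutative henselian pair $(R_\alpha^G,I_\alpha^G)^h$ and the finite algebra $(R_\alpha,I_\alpha)^h\int G$ over it. Your injectivity argument (radical of $R\int G$ contains $I\int G$, then lift units and conjugate idempotents) is essentially the same Nakayama/radical computation the paper uses and is fine; it is the surjectivity that needs the descent and Greco machinery you are trying to avoid.
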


\begin{proof} This is a special case of Proposition \ref{prop:eq_rig_K0}.
\end{proof}

We denote by $\Sp$ the $\infty$-category of spectra. We recall from \cite[Definition 4.4]{CMM} the notions of perfectness and pseudocoherence
of spectrum-valued functors on a category relative to a subcategory:
Given a small full subcategory $\D$ of a locally small category $\C$, a functor $F: \C \to \Sp$ is called $\D$-perfect if $F$
belongs to the thick subcategory generated by the functors $\{ \Sigma^\infty_+ \Hom_\C (D, \cdot)\,\mid\, D\in{\mathcal D}\}$ in the presentable, stable $\infty$-category
$\Fun(\C, \Sp)$. A functor $F \in \Fun(\C, \Sp)$ is said to be
$\D$-pseudocoherent if for each $n \in \Z$, there exists a $\D$-perfect functor $F_n$ and a map $F_n \to F$ such that
$\tau_{\leq n}F_n(C) \to \tau_{\leq n}F(C)$ is an equivalence for all $C \in \C$.
 In the particular case
when $\D=\left(\Ring\right)_\Sigma\subseteq\C=\Ring$, $F$ is called projectively pseudocoherent, see \cite[Definition 4.12, (2)]{CMM}. We pose the immediate equivariant generalization of this as a definition.

\begin{definition}
A functor $F:\RingG_R\longrightarrow\Sp$ is called $G$-projectively pseudocoherent ($G$-pscoh for short),
if it is $\left(\RingG_R\right)_\Sigma$-pseudocoherent.
\end{definition}

Our first aim then is to establish the following generalization of \cite[Proposition 4.21]{CMM}. 

\begin{proposition}\label{prop:K_G_pscoh}
The functor $\RingG\longrightarrow\Sp$, $I\mapsto K((\Z\ltimes I)\int G)$ is $G$-pscoh.
\end{proposition}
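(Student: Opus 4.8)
The strategy is to follow the non-equivariant argument of \cite[Proposition 4.21]{CMM} verbatim, replacing the relevant categories and functors by their $G$-equivariant counterparts and feeding in the equivariant inputs established above. The key structural fact is that $K$-theory commutes with filtered colimits of rings and that the functor in question is built out of finitely presented data, so pseudocoherence ultimately reduces to a statement about $\tau_{\le n}$ being detected by a $\D$-perfect approximation on the compact projective generators $F''_\Z(m)$ $(m\ge 0)$.

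First I would recall the general machinery from \cite[Section 4]{CMM} that produces pseudocoherent functors: a spectrum-valued functor on $\RingG$ is $G$-pscoh as soon as it preserves filtered colimits, its homotopy groups are ``bounded'' in an appropriate sense on the generators, and $\pi_0$ (more precisely $K_0$) is already $\D$-perfect. The first of these holds because $K$-theory commutes with filtered colimits of rings, and the assignment $I\mapsto (\Z\ltimes I)\int G$ commutes with filtered colimits (adjoining a unit and forming the twisted group ring are both finite-limit-free constructions on the underlying modules). The last ingredient is exactly Proposition \ref{prop:K_0}: the map $p^*\colon K_0((\Z\ltimes I)\int G)\to K_0(\Z[G])$ is an isomorphism, which says the $0$-th homotopy group of our functor is the constant functor $K_0(\Z[G])$, and a constant functor at a finitely generated module is $\D$-perfect. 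This is what pins down the bottom of the Postnikov tower.

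The inductive step is then to show that, granting a $\D$-perfect approximation through degree $n-1$, one can extend it through degree $n$; this is where one uses that on the free generators $F''_\Z(m)$ the relevant relative $K$-groups are finitely generated. Concretely, one writes $(\Z\ltimes F''_\Z(m))\int G$ as a filtered colimit of smooth (or at least Noetherian, regular) $\Z$-algebras with $G$-action, using Proposition \ref{prop:equivariant_generators} to identify $F''_\Z(m)$ with the henselization at the origin of a sum of copies of the regular representation, and then one invokes finite generation of $K$-groups of such rings together with the fact that the twisted group ring over a regular ring is again of finite global dimension (so its $K$-theory is well-behaved). Assembling the approximations $F_n\to F$ with the correct connectivity gives $G$-pscoh.

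The main obstacle, I expect, is not any single step but the bookkeeping needed to check that the formal criterion of \cite{CMM} for producing pseudocoherent functors applies in the twisted-group-ring setting: one must verify that $I\mapsto (\Z\ltimes I)\int G$ sends compact projective objects of $\RingG$ to rings whose $K$-theory has finitely generated homotopy groups in each degree, uniformly enough to run the Postnikov induction. The potential subtlety is that the underlying ring $\Z\ltimes F''_\Z(m)$ is henselian but not Noetherian, so one cannot directly cite finite generation of $K$-groups; instead one approximates by Noetherian subrings as in \cite[proof of Proposition 4.21]{CMM}, and the twisting by $G$ (which only changes the multiplication, not the underlying module, and is compatible with these approximations) goes along for the ride. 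Once this is set up, the proof is a routine transcription of the argument in \cite{CMM}, and I would signal that explicitly rather than reproduce every detail.
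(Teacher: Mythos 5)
Your proposal diverges substantially from the paper's actual argument, and the route you sketch contains a gap you cannot close. After correctly isolating $K_0$ via Proposition \ref{prop:K_0} and peeling it off using the fiber sequence $\tau_{\ge 1}K\to K\to K_0$, you propose to handle $\tau_{\ge 1}K$ by approximating $(\Z\ltimes F''_\Z(m))\int G$ by Noetherian regular subrings and ``invoking finite generation of $K$-groups of such rings.'' This does not work: finite generation of $K$-groups of regular Noetherian rings is an open problem (a form of Bass's conjecture), not a theorem one can cite. More fundamentally, even if one knew the homotopy groups of $F(D)$ were finitely generated for each compact projective $D$, that pointwise information is not enough to produce the $\D$-perfect approximations $F_n\to F$ required by the definition of pseudocoherence, which is a statement about the thick subcategory in $\Fun(\C,\Sp)$ rather than a degreewise finiteness condition on values.

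The paper's proof of this proposition (following \cite[Proposition 4.21]{CMM}) proceeds quite differently. After disposing of $K_0$, it invokes \cite[Propositions 4.10, 4.11]{CMM} to reduce the pseudocoherence of $\tau_{\ge 1}K((\Z\ltimes(-))\int G)$ to that of the $H\Z$-chain functor $C_*(BGL((\Z\ltimes(-))\int G);\Z)$. It then applies van der Kallen's homology stability (after checking, via the structure of the radical ideal $(-)\int G \subset (\Z\ltimes(-))\int G$ with quotient $\Z[G]$, that the stable range is bounded uniformly) to pass to a fixed $GL_n$. The key move is the short exact sequence of groups
\[
1\longrightarrow X(-)\longrightarrow GL_n((\Z\ltimes (-))\int G)\longrightarrow GL_n(\Z[G])\longrightarrow 1,
\]
which exhibits $C_*(BGL_n(\ldots);\Z)$ as a homotopy quotient $\bigl(C_*(BX(-);\Z)\bigr)_{h\, GL_n(\Z[G])}$. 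Pseudocoherence of $C_*(BX(-);\Z)$ follows because $X(I)\simeq I^{|G|\cdot n^2}$ as sets, and the homotopy quotient can be controlled because $GL_n(\Z[G])$ is an arithmetic subgroup of the reductive $\Q$-group $GL_n(\Q[G])^\times$, so by Borel--Serre it contains a finite-index normal subgroup with finite CW classifying space. None of this group-homological and arithmetic-group machinery appears in your proposal, and it cannot be replaced by a Noetherian approximation argument.
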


\begin{proof}
Using the fiber sequence of functors 
\[ \tau_{\ge 1}K((\Z\ltimes (-))\int G)\longrightarrow K((\Z\ltimes (-))\int G)\longrightarrow
\tau_{\leq 0}K((\Z\ltimes (-))\int G)=K_0((\Z\ltimes (-))\int G)\]
and the fact that $G$-pscoh functors form a thick subcategory \cite[Proposition 4.8, (1)]{CMM}, it suffices to see separately
the $G$-projective pseudocoherence of $\tau_{\ge 1}K((\Z\ltimes (-))\int G)$ and of
$K_0((\Z\ltimes (-))\int G)$.

For the latter, Proposition \ref{prop:K_0} yields an isomorphism $K_0((\Z\ltimes (-))\int G)\simeq K_0(\Z[G])$
to the constant functor with value the finitely generated abelian group $K_0(\Z[G])$, see
\cite[Theorem 2.2.1]{kuku_book}. This settles the claim for this term.

To see that the other term is $G$-pscoh, we use the criterion \cite[Proposition 4.10 and Proposition 4.11]{CMM} to reduce to seeing that 
the functor
\[ H\Z\otimes\Sigma_+^\infty\Omega^\infty \tau_{\ge 1}K((\Z\ltimes (-))\int G)\]
is $G$-pscoh. It is well known (see \cite[chapter IV, \S 1]{weibel_k_book}) that 
this functor is equivalent to $C_*(BGL((\Z\ltimes (-))\int G);\Z)$, the complex of integral chains on the
classifying space of the infinite general linear group. We now use homology stability as given by
\cite[Theorem in section 4.11]{van_kallen} for the associative ring $A(-):=(\Z\ltimes (-))\int G$. To do so, we need to see that the stable range of $A(-)$ is bounded independently of the argument
$-\in\RingG$. Firstly, it is easy to see that dividing out a radical ideal does not change the stable range (cf. \cite[p. 32]{lam_bass_work} and \cite[chapter I, ex. 1.12, (v)]{weibel_k_book}), and at the beginning of the proof of Proposition \ref{prop:eq_rig_K0} we will see that $(-)\int G$ is a radical ideal
in $A(-)$ with quotient ring $\Z[G]$. This already gives the independence of the stable range of $A(-)$ of the argument $(-)$, and since $\Z[G]$ is finite over its central subring $\Z$, this is bounded by (in fact, equal to) the stable range of $\Z$ (according to Bass's stable range theorem \cite[Chapter V, Theorem 3.5]{bass}). We conclude that for every $n\ge 1$ the obvious map on truncations
\[ \tau_{\leq n}C_*(BGL_{2n+1}((\Z\ltimes (-))\int G);\Z)\longrightarrow \tau_{\leq n}C_*(BGL((\Z\ltimes (-))\int G);\Z)\]
is an equivalence. Renaming indices, this reduces us to seeing that for a fixed $n\ge 1$, the functor
\[ C_*(BGL_n((\Z\ltimes (-))\int G);\Z)\]
is $G$-pscoh. There is a short exact sequence of groups
\[ 1\longrightarrow X(-)\longrightarrow GL_n((\Z\ltimes (-))\int G)\stackrel{\pi}{\longrightarrow} GL_n(\Z[G])\longrightarrow 1,\]
defining $X(-)$. \footnote{To see that $\pi$ is onto, recall that the augmentation $\Z\ltimes (-))\int G\longrightarrow\Z[G]$ is {\em split} surjective.}
This gives an equivalence
\[ C_*(BGL_n((\Z\ltimes (-))\int G);\Z)\simeq\left( C_*(BX(-);\Z) \right)_{h(GL_n(\Z[G]))}.\]
To conclude the argument exactly as in the proof of \cite[Proposition 4.19]{CMM}, it remains to establish that, firstly, the functor 
$C_*(BX(-);\Z)$ is $G$-pscoh and that, secondly, there is a finite index normal subgroup $N\subseteq 
GL_n(\Z[G])$ such that its classifying space $BN$ is equivalent to a finite CW-complex. The first claim follows as in
{\em loc.cit.‚}, because
$X(I)\simeq I^{\mid G\mid\cdot n^2}$ (as sets), and the second claim follows from work of Borel and Serre,
specifically \cite[section 2.4, Th\'eor\`eme 4 and section 1.5, Proposition 10]{serre}, if we can show that 
$GL_n(\Z[G])$ is an arithmetic subgroup of a suitable reductive group ${\mathcal G}$ over $\Q$. Indeed, one can take for ${\mathcal G}$ the group of units of the $\Q$-algebra $M_n(\Q[G])$:
It is clear that \[ GL_n(\Z[G])\subseteq {\mathcal G}(\Q)=GL_n(\Q[G])\] is an arithmetic
subgroup, and since $\Q[G]\otimes_\Q\mathbb{C}\simeq\mathbb{C}[G]$ is a product of full matrix rings over
$\mathbb{C}$, the group ${\mathcal G}\otimes_\Q \mathbb{C}$ is a finite product of various $GL_{i,\mathbb{C}}$'s, and
hence is (connected and) reductive.
\end{proof}

The following generalization of \cite[Proposition 4.25]{CMM} is even more immediate.

\begin{proposition}\label{prop:TC_G_pscoh} For every prime $p$, the functor $\RingG\longrightarrow\Sp$, $I\mapsto
TC((\Z\ltimes I)\int G)/p$ is $G$-pscoh.
\end{proposition}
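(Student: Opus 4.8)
The plan is to run the proof of \cite[Proposition 4.25]{CMM} in the equivariant setting, feeding in the twisted group ring exactly as in the proof of Proposition~\ref{prop:K_G_pscoh}; this really is more immediate than the $K$-theoretic case, as no homology stability and no input on arithmetic groups enter --- $THH$- and $TC$-type functors being directly ``polynomial'' in the nonunital ring. Throughout write $A(I):=(\Z\ltimes I)\int G$ for $I\in\RingG$. As recorded in the proof of Proposition~\ref{prop:K_G_pscoh}, the functor $I\mapsto A(I)$ takes values in associative unital $\Z$-algebras, commutes with filtered colimits (since $I\mapsto\Z\ltimes I$ is a left adjoint and $(-)\int G$ is, on underlying modules, the finite free functor $A\mapsto A^{\oplus|G|}$), and satisfies $A(I)=\Z[G]\oplus(I\int G)$ with $I\int G$ a radical ideal and quotient ring $\Z[G]$ (see the opening of the proof of Proposition~\ref{prop:eq_rig_K0}).

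I would first split off a constant summand. The augmentation $A(I)\to\Z[G]$ is split by the unit $\Z[G]\to A(I)$, so $TC(A(I))/p$ decomposes as $TC(\Z[G])/p\oplus\widetilde{TC}(A(I))/p$, where $\widetilde{TC}(A(I)):=\mathrm{cofib}\big(TC(\Z[G])\to TC(A(I))\big)$; since $G$-pscoh functors form a thick subcategory \cite[Proposition 4.8]{CMM}, it suffices to treat these separately. The first is the constant functor at the spectrum $TC(\Z[G])/p$. As $\Z[G]$ is module-finite over $\Z$, this spectrum is bounded below (indeed $(-2)$-connective, as $TC(-;\Zp)$ is on connective rings) with finite homotopy groups in each degree, hence is almost perfect; and the constant functor at an almost perfect spectrum is $\big(\RingG\big)_\Sigma$-pseudocoherent, being approximated through each degree by the constant functor at a finite spectrum --- which lies in the thick subcategory generated by $\Sigma^{\infty}_{+}\Hom_{\RingG}(F''_\Z(0),-)\simeq\mathbb{S}$.

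For the reduced summand $I\mapsto\widetilde{TC}(A(I))/p$ I would follow \cite[\S 4.4]{CMM} essentially verbatim. The ingredients are: the Nikolaus--Scholze formula $TC(R;\Fp)\simeq\mathrm{fib}\big(\varphi-\mathrm{can}\colon TC^{-}(R;\Fp)\to TP(R;\Fp)\big)$; the facts that $TC(-;\Fp)$ commutes with filtered colimits of connective rings and that $TC(R;\Fp)$ is bounded below by a constant independent of the connective ring $R$; and the criterion \cite[Proposition 4.10, Proposition 4.11]{CMM}, reducing $G$-pseudocoherence of a uniformly bounded-below functor to the levelwise, filtered-colimit-compatible coherence of its mod-$p$ and rational homology in each degree. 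One thereby reduces the claim for $\widetilde{TC}(A(-))/p$ to the claim for the reduced $\widetilde{THH}(A(-);\Fp)$ --- the terms $TP$ and $TC^{-}$ being built from $THH$ by the Tate construction $(-)^{tS^1}$ and homotopy fixed points $(-)^{hS^1}$, while the boundedness of $TC(-;\Fp)$ controls the relevant truncations even though $TP$ and $TC^{-}$ are themselves unbounded. Finally, $\widetilde{THH}(A(I);\Fp)$ is computed by the reduced cyclic bar construction of the split extension $A(I)=\Z[G]\oplus(I\int G)$; filtering by weight, with associated graded the square-zero version, one sees that in each homological degree only finitely many weights $w$ contribute, and that the weight-$w$ piece is assembled from the $w$-fold derived tensor power of $I\int G$ over $\Z[G]$, whose mod-$p$ homology is, in each degree, a finitely presented subquotient of finite direct sums of tensor powers of corepresentable functors, hence a coherent functor $\RingG\to\mathrm{Ab}$. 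Since coherent functors are closed under the operations in play, this yields the claim.

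The genuine obstacle is precisely this passage from $THH/p$ to $TC/p$: although $TC^{-}$ and $TP$ are individually unbounded and fail to commute with filtered colimits, their difference $TC(-;\Fp)$ does both, and its truncations are controlled by a bounded range of $THH$. This is the technical heart of \cite[\S 4.4]{CMM}, which I would import wholesale; in our situation the finite group plays no role beyond the two facts already invoked, that $A(-)$ preserves filtered colimits and that $\Z[G]$ is module-finite over $\Z$ (so that no boundedness or finiteness assertion degenerates), the rest being the same equivariant bookkeeping that appeared in the proof of Proposition~\ref{prop:K_G_pscoh}.
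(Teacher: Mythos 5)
Your proposal is correct and follows the same route as the paper, whose entire proof is to cite \cite[Proposition 4.25]{CMM} and observe that its core ingredient \cite[Proposition 2.19]{CMM} --- the statement about $TC(-)/p$ on cyclotomic spectra --- applies unchanged in the equivariant setting. Your reconstruction (splitting off the constant $TC(\Z[G])/p$ summand, reducing the reduced part to $THH$ via the criterion of \cite[Propositions 4.10--4.11]{CMM}, and identifying the $THH/p\to TC/p$ passage as the technical heart to import wholesale) is precisely the CMM argument with $\Z$ replaced by $\Z[G]$ and $\Z\ltimes I$ by $(\Z\ltimes I)\int G$, using only that $A(-)$ preserves filtered colimits and that $\Z[G]$ is module-finite over $\Z$.
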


\begin{proof} This is identical to {\em loc. cit.}, and we leave the details to
the reader. Recall at least that the core part of the argument, namely \cite[Proposition 2.19]{CMM}, is a result about
$TC(-)/p$ considered on the category of cyclotomic spectra, which 
applies equally well to the case at hand.
\end{proof}

Recall that we write $\Kinv$ for the fiber of the cyclotomic trace $K\longrightarrow TC$.
We introduce a relative term $\Kinv((\Z\ltimes I)\int G,I\int G)$ to sit in a fiber sequence
\[ \Kinv((\Z\ltimes I)\int G,I\int G)\longrightarrow \Kinv((\Z\ltimes I)\int G)\longrightarrow
\Kinv(\Z\int G)=\Kinv(\Z[G]).\]
Combining Propositions \ref{prop:K_G_pscoh} and \ref{prop:TC_G_pscoh} yields the following, which is the finiteness result
to be used in the proof of Theorem \ref{thm:main}.

\begin{proposition}\label{prop:G_pscoh_final} For every prime $p$, the functor
\[ \RingG\longrightarrow\Sp\, , \, I \mapsto \Kinv((\Z\ltimes I)\int G, I\int G)/p\]
is $G$-pscoh.
\end{proposition}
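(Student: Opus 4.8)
The plan is to deduce this from Propositions \ref{prop:K_G_pscoh} and \ref{prop:TC_G_pscoh} together with the fact that $G$-pscoh functors are closed under the relevant operations. First I would recall from \cite[Proposition 4.8]{CMM} that $G$-projectively pseudocoherent functors form a thick subcategory of $\Fun(\RingG,\Sp)$; in particular they are stable under finite limits and colimits, and in particular under taking fibers of maps. Applying this to the defining fiber sequence
\[
\Kinv((\Z\ltimes I)\int G, I\int G)/p\longrightarrow \Kinv((\Z\ltimes I)\int G)/p\longrightarrow \Kinv(\Z[G])/p,
\]
I observe that the last term is the constant functor on $\Kinv(\Z[G])/p$; a constant functor with finite homotopy groups (or more precisely, one that is $G$-perfect, being a retract of $\Sigma^\infty_+\Hom(F''(0),-)=\Sigma^\infty_+\Hom(0,-)=\mathbb{S}$ after smashing appropriately, hence pscoh) is $G$-pscoh. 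So it suffices to show that $I\mapsto \Kinv((\Z\ltimes I)\int G)/p$ is $G$-pscoh, and since mod-$p$ reduction is $-\otimes \mathbb{S}/p$, a finite colimit, it is enough that $I\mapsto \Kinv((\Z\ltimes I)\int G)$ is $G$-pscoh.

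Next I would use the defining fiber sequence $\Kinv \to K \to TC$, which after applying to $(\Z\ltimes I)\int G$ and reducing mod $p$ gives a fiber sequence of functors
\[
\Kinv((\Z\ltimes(-))\int G)/p\longrightarrow K((\Z\ltimes(-))\int G)/p\longrightarrow TC((\Z\ltimes(-))\int G)/p.
\]
By Proposition \ref{prop:K_G_pscoh} the functor $I\mapsto K((\Z\ltimes I)\int G)$ is $G$-pscoh, hence so is its mod-$p$ reduction. By Proposition \ref{prop:TC_G_pscoh} the functor $I\mapsto TC((\Z\ltimes I)\int G)/p$ is $G$-pscoh. Since $G$-pscoh functors are closed under fibers, $I\mapsto \Kinv((\Z\ltimes I)\int G)/p$ is $G$-pscoh, and then so is the relative term as above.

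The only genuinely delicate point — and the one I would state carefully rather than wave at — is the handling of the ``base'' term $\Kinv(\Z[G])$, i.e.\ making sure that the constant functor with that value is $G$-pscoh. This is where one uses that $K_*(\Z[G])$ and $TC_*(\Z[G])$ are suitably finitely generated (so that the constant functor is $G$-perfect after truncation, just as in \cite[Proposition 4.21]{CMM}), combined with the observation that constant functors are always $\left(\RingG\right)_\Sigma$-pseudocoherent provided they are pseudocoherent as objects of $\Sp$; but since we have already shown the relative term fits in a fiber sequence with two $G$-pscoh terms, the cleanest route is simply to invoke thickness once more. Everything else is formal manipulation with the thick subcategory of $G$-pscoh functors, exactly paralleling the corresponding step in \cite{CMM}, and I would leave those verifications to the reader.
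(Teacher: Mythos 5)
Your proposal matches the paper's (essentially unstated) proof: reduce via thickness of $G$-pscoh functors along the two defining fiber sequences and invoke Propositions~\ref{prop:K_G_pscoh} and~\ref{prop:TC_G_pscoh}. Two small points of hygiene: you cannot reduce to showing $I\mapsto \Kinv((\Z\ltimes I)\int G)$ is $G$-pscoh \emph{before} passing mod $p$, since Proposition~\ref{prop:TC_G_pscoh} only controls $TC(-)/p$ (your next paragraph silently does the right thing, working mod $p$ throughout); and the parenthetical ``retract of $\Sigma^\infty_+\Hom(F''(0),-)$ after smashing'' is not quite the correct justification that the constant functor on $\Kinv(\Z[G])/p$ is $G$-pscoh --- what one actually uses, exactly as in the $K_0$ step of Proposition~\ref{prop:K_G_pscoh}, is that a constant functor valued in a bounded-below spectrum with degreewise finitely generated homotopy groups can be approximated through each truncation by constant functors on finite spectra, which lie in the thick subcategory generated by $\Sigma^\infty_+\Hom_{\RingG}(0,-)=\mathbb{S}$.
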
 

\section{A geometric special case}\label{sec:special_case}

The purpose of this section is to establish a geometric special case of our main
result Theorem \ref{thm:main}. This equivariant rigidity result will follow from its non-equivariant special
case \cite[Theorem A]{CMM} together with decomposition results of Vistoli and Tabuada-Van den Bergh \cite{tabuada_vdB}.
To formulate it, fix for the rest of this section a finite group $G$, a field $k$ of characteristic not dividing $|G|$ \footnote{By convention, this condition is satisfied if $k$ is of characteristic zero.} and a prime $p$ not dividing $|G|$ (but possibly equal to the characteristic of $k$). Let $X$ be an affine, smooth $k$-algebra 
with a $G$-action and assume given a rational point $x\in X(k)$ fixed by $G$. Then $G$ acts canonically on the henselization $\sO^h_{X,x}$ of the local ring $\sO_{X,x}$, and the canonical map $\pi\colon\sO^h_{X,x}
\longrightarrow k$ to the residue field is $G$-equivariant (for $k$ endowed with the trivial $G$-action). Hence it induces a map on twisted group rings $\sO^h_{X,x} \int G\longrightarrow k\int G=k[G]$.
The result then is the following.

\begin{proposition}\label{prop:geometric_rigidity}
In the above situation, the map induced by $\pi$
\begin{equation}\label{eq:claim} \Kinv(\sO^h_{X,x} \int G)/p\stackrel{\simeq}{\longrightarrow} \Kinv(k[G])/p
\end{equation}
is an equivalence.
\end{proposition}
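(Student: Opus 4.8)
The plan is to deduce the equivalence \eqref{eq:claim} from the non-equivariant rigidity theorem \cite[Theorem A]{CMM} by means of the orbifold decomposition of additive invariants of quotient stacks due to Vistoli and Tabuada--Van den Bergh \cite{tabuada_vdB}. First I would note that, because $\sO^h_{X,x}$ is a regular local ring and $|G|\in(\sO^h_{X,x})^*$, the twisted group ring $\sO^h_{X,x}\int G$ has finite global dimension; hence $\Kinv(\sO^h_{X,x}\int G)=\mathrm{fib}(K\to TC)(\sO^h_{X,x}\int G)$, computed from finitely generated projectives, coincides with the value of the additive invariant $\Kinv$ on the stable $\infty$-category of perfect complexes of $\sO^h_{X,x}\int G$-modules, i.e.\ on perfect complexes on the quotient stack $[\Spec\sO^h_{X,x}/G]$; likewise for $k[G]$ and $[\Spec k/G]$. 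In particular $\Kinv$, being a fibre of a map of additive invariants, is an additive invariant, so the decomposition of \cite{tabuada_vdB} applies to it.

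Since $|G|$ is invertible in $k$, hence in $\sO^h_{X,x}$, the decomposition theorem provides, for any additive invariant $E$, a splitting
\[ E\bigl([\Spec\sO^h_{X,x}/G]\bigr)\Bigl[\tfrac1{|G|}\Bigr]\;\simeq\;\bigoplus_{[\sigma]}\Phi_\sigma\Bigl(E\bigl(\sO^h_{X^\sigma,x}\otimes_k\ell_\sigma\bigr)\Bigr)\Bigl[\tfrac1{|G|}\Bigr], \]
natural in the $G$-scheme $\Spec\sO^h_{X,x}$ and indexed by the conjugacy classes $[\sigma]$ of elements of $G$. Here $X^\sigma$ is the fixed subscheme, which is \emph{smooth} over $k$ since $X$ is smooth and $\mathrm{ord}(\sigma)$ is invertible in $k$, and which contains $x$ as a rational point since $x$ is $G$-fixed; $\ell_\sigma=k(\zeta)$ is the finite separable extension of $k$ obtained by adjoining a primitive $\mathrm{ord}(\sigma)$-th root of unity $\zeta$; and $\Phi_\sigma$ is the composite of the exact operations of restriction to the direct summand on which the central element $\sigma$ acts by the scalar $\zeta$, Morita invariance, and passage to homotopy fixed points under a finite group of order dividing $|G|$ (built from $\mathrm{Gal}(\ell_\sigma/k)$ and the centralizer of $\sigma$ modulo $\langle\sigma\rangle$, acting on $\sO^h_{X^\sigma,x}\otimes_k\ell_\sigma$). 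Applied to the trivial $G$-action on $\Spec k$, for which $(\Spec k)^\sigma=\Spec k$, the same theorem splits $E(k[G])[\tfrac1{|G|}]$ with $\sO^h_{X^\sigma,x}$ replaced by $k$ in every summand.

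Now take $E=\Kinv$ and reduce modulo $p$. As $p\nmid|G|$, the spectrum $\Kinv(-)/p$ is $\Z[\tfrac1{|G|}]$-linear, so the displayed splitting holds for it verbatim; each $\Phi_\sigma$ commutes with $(-)/p$, the only non-formal point being the homotopy fixed point constituent, which is harmless because the relevant group has order prime to $p$, so that its Tate construction vanishes and homotopy fixed points agree, mod $p$, with homotopy orbits, a colimit. By naturality applied to the $G$-equivariant closed immersion $x\colon\Spec k\hookrightarrow\Spec\sO^h_{X,x}$, the reduction map induced by $\pi$ respects the two splittings, and on the $[\sigma]$-summand it is $\Phi_\sigma$ applied, mod $p$, to the map $\Kinv(\sO^h_{X^\sigma,x}\otimes_k\ell_\sigma)\to\Kinv(\ell_\sigma)$ obtained by restricting $\pi$ to $\sigma$-fixed loci. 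But this last map is the reduction map of the henselian pair $(\sO^h_{X^\sigma,x}\otimes_k\ell_\sigma,\mathfrak m)$ --- note that $\sO^h_{X^\sigma,x}\otimes_k\ell_\sigma$ is a finite \'etale, hence again henselian local, $\ell_\sigma$-algebra with residue field $\ell_\sigma$ --- so \cite[Theorem A]{CMM} makes it an equivalence after $(-)/p$. Since $\Phi_\sigma$ is built from exact functors, limits and colimits it preserves equivalences; summing over $[\sigma]$ shows that \eqref{eq:claim} is an equivalence.

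The hard part will be to have the decomposition available in exactly this shape. One needs it (i) at the level of noncommutative motives, so that it can be transported through both $K$ and $TC$, and thus through $\Kinv$; (ii) naturally in the $G$-equivariant scheme, so that the reduction map $\pi$ is compatible with it; and, crucially, (iii) for the ring $\sO^h_{X,x}$, which is not of finite type over $k$ --- so one cannot sidestep this by writing $\sO^h_{X,x}$ as a filtered colimit of finite-type models and passing to the colimit, since $TC$ is not finitary. It is likewise essential that the summands be expressed through the invariant of the \emph{fixed} loci $X^\sigma$, which are smooth and to which \cite[Theorem A]{CMM} therefore applies, rather than through quotients or through $X$ itself; extracting the fixed locus is precisely where Vistoli's localization argument (invertibility of the conormal bundle away from the fixed locus) enters. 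The remaining bookkeeping --- the auxiliary Galois groups, centralizer quotients and isotypic projectors in each $\Phi_\sigma$ --- is controlled by the fact that all the finite groups that occur have order dividing $|G|$, hence prime to $p$, which is what makes the mod-$p$ homotopy fixed points behave as colimits.
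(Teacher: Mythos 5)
Your overall strategy is exactly the paper's: apply the Tabuada--Van den Bergh decomposition of additive invariants of quotient stacks, then kill each summand via the non-equivariant rigidity theorem of Clausen--Mathew--Morrow applied to the henselian local rings $\sO^h_{X^\sigma,x}$ of the fixed loci. But there is a genuine gap, and it sits precisely at the step you yourself flag as ``the hard part'': you assert that one cannot obtain the decomposition for $\sO^h_{X,x}$ by writing it as a filtered colimit of finite-type models ``since $TC$ is not finitary.'' This is mistaken, and it is exactly the move the paper makes. The paper works throughout with the graded abelian group $E := \pi_*\bigl(\Kinv(-)/p\bigr)$ rather than with $\Kinv$ or $TC$ directly, and the crucial input is \cite[Theorem 2.7]{CMM}: while $TC$ itself is not finitary, $TC(-)/p$ \emph{is}, so $E$ commutes with filtered colimits. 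The paper therefore writes down the decomposition diagram for each pointed \'etale neighborhood $(Y,y)$ of $(X,x)$ (a finite-type situation where \cite{tabuada_vdB} applies verbatim), and passes to the filtered colimit over all such neighborhoods. One then needs the purely commutative-algebraic fact that henselization commutes with the closed immersions $X^\sigma\hookrightarrow X$ (more generally with integral extensions, \cite[TAG 0DYE]{stacks-project}), so that the colimit of the summands $\widetilde E(Y^\sigma\times\Spec k[\sigma])$ is $\widetilde E(\Spec\sO^h_{X^\sigma,x}\times\Spec k[\sigma])$. At that point each $\iota_\sigma\times\id$ is a closed immersion with henselian defining ideal and \cite[Theorem A]{CMM} finishes the argument, just as you intend.

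Two smaller remarks. First, you use the conjugacy-class form of the decomposition, with roots of unity $\ell_\sigma=k(\zeta)$, isotypic projectors, and Galois/centralizer descent packaged into operations $\Phi_\sigma$; the paper instead uses the cyclic-subgroup form \cite[Remark 1.3, ii) and iii)]{tabuada_vdB}, in which the summands are $\widetilde E\bigl(X^\sigma\times\Spec k[\sigma]\bigr)$ with an outer $G$-fixed-points. The latter is somewhat cleaner here because one never needs to track roots of unity or worry about the structure of $\Phi_\sigma$: one only needs that $\widetilde E$ is a natural direct summand of $E$, so that vanishing for all the summands forces vanishing for $E(\iota)$. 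Second, note that passing to $\pi_*$ is what makes the $G$-fixed-points on the outside of the decomposition manageable: since $|G|$ is invertible, taking $G$-invariants of $\Z[1/|G|]$-modules is exact and commutes with filtered colimits, so you do not actually need the analysis of homotopy fixed points versus homotopy orbits that you sketch. Your argument is morally right but incomplete; once you replace the pessimism about finitariness with the observation that $E=\pi_*(\Kinv(-)/p)$ is finitary (by \cite[Theorem 2.7]{CMM}), the filtered-colimit reduction goes through and closes the gap.
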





\begin{proof} We start by setting the stage to apply 
\cite{tabuada_vdB}. We denote by $E:=\pi_*\left(\Kinv(-)/p\right)$, and observe that this is an
additive invariant taking values in $\Z\left[\frac{1}{|G|}\right]$-modules and commuting with filtered colimits: For algebraic $K$-theory, this is classical and for $TC(-)/p$ it follows from \cite[Theorem 2.7]{CMM}.
Now, \cite[Remark 1.3, ii) and iii)]{tabuada_vdB} implies that 

\begin{equation}\label{eq:decomp_map} E([X/G])\stackrel{\simeq}{\longrightarrow}\left( \bigoplus\limits_{\sigma\subseteq G\mbox{ {\small cyclic}}} \widetilde{E}\left( X^\sigma\times\Spec(k[\sigma])\right) \right)^G,
\end{equation}
where
$X^\sigma\subseteq X$ is the subscheme fixed by $\sigma$, and $\widetilde{E}$
refers to a certain functorially defined direct summand of $E$ (depending on $\sigma$).
Since we will not require knowledge of the exact shape of that summand, we will not review its definition here.

We observe that the $G$-fixed point $x\in X(k)$ determines a map
 \[\bar{x}:[\Spec(k)/G]\longrightarrow [X/G]\] such that $E(\bar{x})$ participates in a commutative diagram
\begin{equation}\label{eq:diagram1}
\xymatrix{ E([X/G])\ar[rr]^-{\simeq\,\cref{eq:decomp_map}}\ar[dd]^{E(\bar{x})} & & \ar[dd]^{\oplus_\sigma\widetilde{E}(x_\sigma\times\mathrm{id})}\left( \bigoplus\limits_{\sigma\subseteq G\mbox{ {\small cyclic}}} \widetilde{E}\left( X^\sigma\times\Spec(k[\sigma])\right) \right)^G\, \\
 & &  \\E([\Spec(k)/G])\ar[rr]^-{\simeq} & &\,\left( \bigoplus\limits_{\sigma\subseteq G\mbox{ {\small cyclic}}} \widetilde{E}\left(\Spec(k[\sigma])\right) \right)^G,}
\end{equation}
where $x_\sigma$ denotes the unique factorization of $x$ through $X^\sigma\subseteq X$.

Next we want to pass to henselizations. To do this, we observe that everywhere in the above argument, one can replace $([X/G],x)$ with a pointed \'etale neighborhood $(Y,y)$ such that $\kappa(x)\stackrel{\simeq}{\longrightarrow}\kappa(y)$ is an isomorphism on residue-fields. We obtain 
a commutative diagram generalizing \cref{eq:diagram1}
\begin{equation}\label{eq:diagram2}
\xymatrix{ E([Y/G])\ar[rr]^-{\simeq}\ar[dd]^{E(\bar{y})} & & \left( \bigoplus\limits_{\sigma\subseteq G\mbox{ {\small cyclic}}} \widetilde{E}\left( Y^\sigma\times\Spec(k[\sigma])\right) \right)^G \ar[dd]^{\oplus_\sigma \widetilde{E}(y_\sigma\times\id)}\\
 & &  \\
E([\Spec(k)/G])\ar[rr]^-{\simeq} & & \,\left( \bigoplus\limits_{\sigma\subseteq G\mbox{ {\small cyclic}}} \widetilde{E}\left(\Spec(k[\sigma])\right) \right)^G .  }
\end{equation}
Passing to the filtered colimit of all such $(Y,y)$ and recalling that henselization commutes
with the closed immersions $X^\sigma\subseteq X$ (and more generally with integral extensions, see \cite[TAG 0DYE]{stacks-project}), we obtain
\begin{equation}\label{eq:diagram}
\xymatrix{ \ar[dd]^{E(\iota)}E([\Spec(\sO_{X,x}^h)/G])\ar[rr]^-{\simeq} & & \left( \bigoplus\limits_{\sigma\subseteq G\mbox{ {\small cyclic}}} \widetilde{E}\left( \Spec(\sO^h_{X^\sigma,x})\times\Spec(k[\sigma])\right) \right)^G \ar[dd]^{\oplus_\sigma \widetilde{E}(\iota_\sigma\times\id)} \\ & &  \\ E([\Spec(k)/G])\ar[rr]^-{\simeq} & & \, \left( \bigoplus\limits_{\sigma\subseteq G\mbox{ {\small cyclic}}} \widetilde{E}\left(\Spec(k[\sigma])\right) \right)^G.}
\end{equation}
Here, $\iota_\sigma:\Spec(k)\hookrightarrow\Spec(\sO_{X^\sigma,x}^h)$ and
$\iota:[\Spec(k)/G]\hookrightarrow[\Spec(\sO_{X,x}^h)/G]$ are (induced by) the canonical projection to the residue fields. Since each $\iota_\sigma$ is a closed immersion with
henselian defining ideal, so is each $\iota_\sigma\times\id_{\Spec(k[\sigma])}$, and by
\cite[Theorem A]{CMM}, every map $\widetilde{E}(\iota_\sigma\times\id)$ is an isomorphism, and hence so is $E(\iota)$.

To equate $E(\iota)$ with $\pi_*( \cref{eq:claim})$, and thus to conclude the proof,
it remains to recall that $E(-)=\pi_*(\Kinv(-)/p)$ and that since the order $|G|$ is invertible, 
a finitely generated projective module with a semi-linear $G$-action is the same thing as a finitely generated projective left module over the
twisted group ring, so that we have an equivalence of $\infty$-categories of perfect modules
\[ \mathrm{Perf}([\Spec(\sO_{X,x}^h)/G])\simeq\mathrm{Perf}(\sO_{X,x}^h\int G)\]
and similarly with $\sO^h_{X,x}$ replaced by $k$.
\end{proof}

\section{Nil-invariance, excision and exactness}\label{sec:technicalities}

\subsection{Nil-invariance}
\begin{proposition}\label{prop:nil_inv} Let $G$ be a finite group and $\pi:R\longrightarrow R'$ a surjective
homomorphism of commutative rings with a $G$-action such that $\ker(\pi)$ is nilpotent.
Then $\Kinv(R\int G)\stackrel{\simeq}{\longrightarrow}\Kinv(R'\int G)$ is an equivalence.
\end{proposition}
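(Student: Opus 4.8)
The plan is to reduce nil-invariance of $\Kinv((-)\int G)$ to the non-equivariant nil-invariance of $\Kinv$, which is the statement of the Clausen--Mathew--Morrow theorem applied to a nilpotent henselian pair (a nilpotent ideal is henselian). The key observation is that $(-)\int G$ is functorial and exact on the relevant module categories, so it transforms the surjection $\pi\colon R\to R'$ with nilpotent kernel into a surjection $\pi\int G\colon R\int G\to R'\int G$ of associative (non-commutative) rings whose kernel $J:=\ker(\pi)\int G$ is again nilpotent — indeed, as an $R$-module $J$ is just $\ker(\pi)^{\oplus|G|}$, and its powers are controlled by the powers of $\ker(\pi)$ since the twisted multiplication only permutes and twists the coefficients but does not enlarge the ideal generated by $\ker(\pi)$. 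Thus $(R\int G, J)$ is a nilpotent, hence henselian, pair of associative rings.

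Next I would invoke the appropriate form of the CMM rigidity/nil-invariance result for associative rings: \cite[Theorem A]{CMM} (or rather its proof, which works for not-necessarily-commutative henselian pairs — the relevant input being Gabber's observation that henselianness depends only on the ideal as a non-unital ring, and the fact that $\Kinv$ of a nilpotent ideal vanishes, combining the classical nil-invariance of $TC$-type theories with Goodwillie's theorem identifying relative $K$-theory and relative cyclic homology rationally, all packaged in CMM). Concretely: for any associative ring $A$ with nilpotent two-sided ideal $J$, the map $\Kinv(A)\to\Kinv(A/J)$ is an equivalence. Applying this to $A=R\int G$, $J=\ker(\pi)\int G$ gives the claim. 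If one prefers to stay strictly within what is \emph{quoted} in the excerpt, one can instead feed the nilpotent pair into Theorem \ref{thm:main} itself once it is proved — but since this proposition is used \emph{in} the proof of Theorem \ref{thm:main}, that would be circular; so the honest route is the direct appeal to the non-equivariant nilpotent case, which is elementary within the CMM framework.

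The main obstacle is making sure the non-equivariant input is genuinely available in the required generality, namely for \emph{associative} rings and \emph{nilpotent} (not merely henselian-commutative) ideals. Here I would note two points. First, nil-invariance of $K$-theory for nilpotent ideals in associative rings is classical only rationally (Goodwillie) and integrally fails, but nil-invariance of $TC$ and of $\Kinv$ is exactly the subtle point CMM establish; their argument for the nilpotent case is in fact the base case of their induction and does not require commutativity — one checks that the two ingredients (the cyclotomic-spectrum statement \cite[Proposition 2.19]{CMM} and Dundas--Goodwillie--McCarthy) are insensitive to commutativity. Second, one must check that $\ker(\pi)\int G$ is nilpotent: if $\ker(\pi)^m=0$ then I claim $(\ker(\pi)\int G)^m=0$, because a product of $m$ elements $\sum_\sigma r^{(j)}_\sigma e_\sigma$ with all $r^{(j)}_\sigma\in\ker(\pi)$ expands into terms whose coefficients are products $r^{(1)}_{\sigma_1}\,\sigma_1(r^{(2)}_{\sigma_2})\cdots$ of $m$ elements each lying in $\ker(\pi)$ (the ideal $\ker(\pi)$ being $G$-stable), hence vanishing. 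With these two checks in place the proof is a two-line reduction; I expect essentially no further difficulty.
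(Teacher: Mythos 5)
Your proposal matches the paper's proof: both reduce to the Dundas--Goodwillie--McCarthy nil-invariance theorem for $\Kinv$ of associative rings (the paper cites it as [DGM, Ch.\ VII, Thm.\ 0.0.2]), and both verify that $\ker(\pi\int G)=\ker(\pi)\int G$ is nilpotent by the same coefficient-product computation using $G$-stability of $\ker(\pi)$. The initial detour through CMM's Theorem A is unnecessary, as you yourself observe; the direct appeal to DGM that you settle on is exactly what the paper does.
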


\begin{proof} This will follow from \cite[Chapter VII, Theorem 0.0.2]{local_structure} if we can show that the kernel of
(the obviously surjective) ring homomorphism $\pi\int G:R\int G\longrightarrow R'\int G$ is nilpotent.
However, an immediate computation shows that for every $n\ge 0$ we have
\[ \left(\ker(\pi\int G)\right)^n\subseteq (\ker(\pi))^n\int G.\]
\end{proof}

\subsection{Excision}

Assume that
\begin{equation}\label{eq:milnor} \xymatrix{ R\ar[rr]\ar[dd]& & S \ar[dd]^g\\ & & \\R'\ar[rr]& & S' }
\end{equation}
is a Milnor square of commutative rings, i.e. a pull-back diagram of rings with $g$ surjective, see
\cite[Chapter IX, \S 5]{bass} for an early account and \cite{tamme_land} for a current development.
If, in addition, a finite group $G$ acts on \cref{eq:milnor}, then the induced square of twisted group rings
\begin{equation}\label{eq:milnorG} \xymatrix{ R\int G \ar[rr]\ar[dd]& & S\int G \ar[dd]^{g\int G}\\ & & \\R'\int G\ar[rr]& & S'\int G }
\end{equation} 
is clearly still a Milnor square. Denoting by $\Kh$ non-connective algebraic $K$-theory and
by $\Khinv$ the fiber of the cyclotomic trace $\Kh\longrightarrow TC$, we then deduce the following
from \cite[Theorem 3.3]{tamme_land}.
\begin{proposition}\label{prop:excision_one} In the above situation, 
\begin{equation}\label{eq:milnorG1} \xymatrix{\Khinv(R\int G) \ar[rr]\ar[dd]& & \Khinv(S\int G) \ar[dd]\\ & & \\\Khinv(R'\int G)\ar[rr]& & \Khinv(S'\int G)}
\end{equation}
is a pull-back square.
\end{proposition}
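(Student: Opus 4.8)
The plan is to reduce the equivariant statement to the non-equivariant excision theorem of Land--Tamme \cite[Theorem 3.3]{tamme_land} applied to the Milnor square \cref{eq:milnorG} of twisted group rings. The essential point is that $\Khinv$ satisfies excision for Milnor squares of \emph{associative} (not necessarily commutative) rings, so once we know \cref{eq:milnorG} is a Milnor square of rings, we are done. That \cref{eq:milnorG} is a Milnor square was already observed in the text preceding the statement: applying the functor $(-)\int G$ to the pull-back square \cref{eq:milnor} yields a square of rings which is again a pull-back (the twisted group ring is a finite free module over the base, so $(-)\int G$ commutes with the relevant limits of underlying modules, and multiplicativity is checked componentwise on the $R$-basis $\{e_\sigma\}$), and $g\int G$ is still surjective since $g$ is.

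First I would record that \cite[Theorem 3.3]{tamme_land} gives, for any Milnor square of associative rings with one leg surjective, a pull-back square of $\Khinv$-spectra; this is precisely the generality in which Land--Tamme work, since their theorem concerns the failure of excision being controlled by a Tor-unital condition that becomes trivial for $\Khinv$. Then I would invoke the previous paragraph's observation that \cref{eq:milnorG} is such a square. Substituting $R\int G$, $S\int G$, $R'\int G$, $S'\int G$ into the Land--Tamme statement yields exactly \cref{eq:milnorG1}.

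There is essentially no obstacle here beyond bookkeeping: the only thing to verify with any care is that $(-)\int G$ preserves pull-back squares of rings, and this is immediate because on underlying abelian groups $A\int G \cong A^{\oplus |G|}$ naturally in $A$, finite products and equalizers of abelian groups are computed levelwise, and the twisted multiplication $(ae_\sigma)(a'e_\tau) = a\sigma(a')e_{\sigma\tau}$ is visibly compatible with any $G$-equivariant ring map in each coordinate. Hence the square of rings obtained is the image of a limit diagram under a functor preserving that limit, so it is again a limit diagram, i.e.\ a pull-back; surjectivity of $g\int G$ follows from that of $g$ coordinatewise. Applying \cite[Theorem 3.3]{tamme_land} then finishes the argument.
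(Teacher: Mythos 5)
Your proposal is correct and follows the same route as the paper: observe that $(-)\int G$ takes the Milnor square \cref{eq:milnor} to the Milnor square \cref{eq:milnorG} (which the paper states without further comment), and then apply \cite[Theorem 3.3]{tamme_land} for $\Khinv$ to associative rings. Your extra justification that $(-)\int G$ preserves the pull-back on underlying abelian groups and that $g\int G$ remains surjective is just a spelled-out version of what the paper calls ``clearly.''
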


To pass to connective $K$-theory here, we need the equivariant generalization of
\cite[Corollary 4.34]{CMM}, namely Proposition \ref{prop:eq_rig_K0} below. 

For an associative, unital ring $A$, we denote by \[ \Proj(A)\] the set of isomorphism classes of finitely generated
projective left $A$-modules.



We start by establishing the equivariant generalization of 
\cite[Lemma 4.20]{CMM}:

\begin{proposition}\label{prop:eq_rig_K0}
Let $(R,I)$ be a henselian pair and $G$ a finite group acting on $(R,I)$. Then
the obvious homomorphism
\[ K_0(R\int G)\stackrel{\simeq}{\longrightarrow}K_0((R/I)\int G)\]
is an isomorphism.
\end{proposition}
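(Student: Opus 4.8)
The plan is to reduce the statement to the well-known non-equivariant case (Gabber's result that $K_0$ is invariant under henselian pairs, i.e.\ \cite[Lemma 4.20]{CMM}) by exploiting that $(R/I)\int G$ is obtained from $R\int G$ by dividing out a nil-like ideal, at least after a suitable identification. First I would observe that $I\int G\subseteq R\int G$ is an ideal with $(R\int G)/(I\int G)\cong (R/I)\int G$, and that it suffices to show this ideal is \emph{radical} in the sense of Bass, i.e.\ contained in the Jacobson radical in a way that makes idempotents lift and $K_0$ invariant; this is exactly the input needed both here and (as the text flags) in the proof of Proposition~\ref{prop:K_G_pscoh}. Concretely, since $R\int G$ is a finite free $R$-module and $I\int G = I\cdot(R\int G)$ as a two-sided ideal, one checks that $1+x$ is a unit in $R\int G$ for every $x\in I\int G$: writing $x=\sum_\sigma a_\sigma e_\sigma$ with $a_\sigma\in I$, the element $x$ is integral over $I$ (every entry of the matrix of right-multiplication by $x$ on the free $R$-module $R\int G$ lies in $I$), so $x$ is topologically nilpotent with respect to the $I$-adic filtration in the henselian sense, and $1+x$ is invertible because $(R,I)$ — equivalently $(R\int G, I\int G)$ — is henselian.

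The cleanest route is: (i) show $(R\int G, I\int G)$ is itself a henselian pair of (possibly noncommutative, but that is fine) rings, and (ii) invoke the noncommutative henselian $K_0$-invariance. For (i), I would use Gabber's characterization (cited in the excerpt) that being a henselian pair depends only on the ideal $I$ regarded as a nonunital ring; since $I\int G$ as a nonunital ring is built from $I$ by a finite free construction and the henselian-ness of $I$ is a statement about solving $t(1+t)^n+g(t)=0$, one transports solvability coordinate-wise. Alternatively, and perhaps more robustly given that $R\int G$ is noncommutative, I would argue directly that idempotents lift along $R\int G\twoheadrightarrow (R/I)\int G$ and that $\Proj$ is unchanged: lifting idempotents follows from $1+(I\int G)\subseteq (R\int G)^*$ together with the standard lemma that idempotents lift modulo an ideal contained in the Jacobson radical when the ring is, say, $I\int G$-adically complete or henselian; and then $K_0$-surjectivity is immediate while injectivity uses that stably free becomes stably free and cancellation over the henselian ring (here one can again pass through the central subring, as in the proof of Proposition~\ref{prop:K_G_pscoh}, where $\Z[G]$ or $R$ controls the stable range).

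The main obstacle I anticipate is noncommutativity: $R\int G$ is not commutative, so I cannot literally quote the commutative henselian-pair formalism, and I must be careful that "henselian pair" and the $K_0$-invariance statement are available in the generality of (module-)finite algebras over a commutative henselian pair. The way around this is to note that $R\int G$ is finite (in fact free) as a module over $R$, and that $I\int G$ lies over $I$; finite algebras over henselian pairs again form henselian pairs, and $K_0$-invariance for such is standard (it is exactly the mechanism used, non-equivariantly, in \cite[Lemma 4.20]{CMM} and in its proof via stable ranges). So after establishing the key point that $I\int G$ is a radical ideal with quotient $(R/I)\int G$ — the assertion explicitly promised in the proof of Proposition~\ref{prop:K_G_pscoh} — the rest is a formal consequence: lifting idempotents plus cancellation give that $\Proj(R\int G)\to\Proj((R/I)\int G)$ is a bijection, hence the induced map on $K_0$ is an isomorphism. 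Note Proposition~\ref{prop:K_0} is then the special case $R=\Z\ltimes I$, $R/I = \Z$, so this argument also discharges that earlier claim.
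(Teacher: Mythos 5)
Your high-level plan (inject by showing $I\int G$ is a radical ideal, surject by lifting projectives) coincides with the paper's, and your injectivity reasoning is essentially correct: the paper proves $I\int G\subseteq\mathrm{rad}(R\int G)$ by Nakayama over the (non-central) subring $R\subseteq R\int G$, which is a cleaner packaging of your ``right multiplication by $1+x$ has determinant in $1+I$'' idea, and then invokes \cite[Ch.~IX, Prop.~1.3]{bass}.

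The gap is in the surjectivity step, and it is exactly the one the paper itself flags in its footnote to the application of Greco's theorem. You write that ``finite algebras over henselian pairs again form henselian pairs, and $K_0$-invariance for such is standard'', proposing to run this with base $R$. But $R$ is \emph{not} central in $R\int G$ (unless the $G$-action is trivial), so $R\int G$ is not an $R$-algebra in the sense the standard results (Greco, or any noncommutative henselian lifting statement) require; the largest commutative central subring available is $R^G$. Now two further problems appear, neither addressed in your proposal: (a) although one can check $(R^G,I^G)$ is again a henselian pair (by equivariance and uniqueness of roots), $R$ is only \emph{integral} over $R^G$, not finite, so $R\int G$ need not be a finite $R^G$-algebra and Greco's theorem does not apply as stated; and (b) your ``transport solvability coordinate-wise'' argument for $I\int G$ being a henselian nonunital ring does not make sense because $I\int G$ is noncommutative and Gabber's nonunital-henselian characterization is for commutative rings. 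The paper resolves (a) by a nontrivial reduction: it writes $R$ as a filtered union of finitely generated $G$-stable subrings $R_\alpha$, where $R_\alpha^G\subseteq R_\alpha$ \emph{is} finite (integral plus finite type), establishes the comparison $(R_\alpha^G,I_\alpha^G)^h\otimes_{R_\alpha^G}R_\alpha\simeq(R_\alpha,I_\alpha)^h$ using integral ascent of henselization, passes through a nilpotent intermediate quotient $R_\alpha/I_\alpha^G R_\alpha$ via the containment $I_\alpha^M\subseteq I_\alpha^G R_\alpha$, and only then applies Greco's theorem to the finite algebra $(R_\alpha,I_\alpha)^h\int G$ over the commutative henselian pair $(R_\alpha^G,I_\alpha^G)^h$. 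Your proposal needs this chain, or something equivalent, to go through; as written, the phrase ``lifting idempotents plus cancellation'' hides the entire content of the surjectivity argument.
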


\begin{proof}
To see that the map is injective, according to \cite[Chapter IX, Proposition 1.3]{bass} it suffices to check
that the kernel of the projection $R\int G\to (R/I)\int G$, namely
\[ I\int G:=\left\{ \sum_{\sigma\in G} a(\sigma)e_\sigma\, \mid \, a(\sigma)\in I\right\}\subseteq R\int G \]
is contained in the radical of $R\int G$. Otherwise, $I\int G$ was not contained in some maximal left ideal
$\n\subseteq R\int G$. Then the subset 
\[ I\int G + \n :=\left\{ x+y \,\mid \, x\in I\int G , y\in \n\right\}\subseteq R\int G\]
was a left-ideal properly containing $\n$, and hence
\begin{equation}\label{eq:equal}
I\int G + \n = R\int G.
\end{equation}
We consider $R=Re_{e}\subseteq R\int G$ as a (non-central !) subring.
Then \cref{eq:equal} holds as an equality of $R$-modules, and since $I\int G=I(R\int G)$ and
$R\int G$ is a finite (and free) $R$-module, Nakayama's lemma\footnote{recall that $I$ is contained in the radical of $R$.} implies that $\n=R\int G$, a contradiction which completes the proof of injectivity.

To see the surjectivity, we will establish the stronger claim that the reduction map
\begin{equation}\label{eq:lift_equivariantly} \xymatrix{ \Proj(R\int G) \ar@{->>}[rr] & & \Proj((R/I)\int G) } \end{equation}
is surjective.
Write $\bar{R}:=R/I$ and fix some $\bar{M}\in\Proj(\bar{R}\int G)$.
We first descend everything to a situation of finite type over the integers.
The ring with $G$-action $R=\cup_\alpha R_\alpha$ is the union of its finitely 
generated, $G$-stable subrings $R_\alpha\subseteq R$. Accordingly, we also have
$\bar{R}=\cup_\alpha(R_\alpha/I_\alpha)=:\cup_\alpha\bar{R}_\alpha$ for $I_\alpha:=R_\alpha\cap I$.
Since then also $\bar{R}\int G=\cup_\alpha\left( \bar{R}_\alpha\int G\right)$, the given $\bar{M}$
descends to some $\bar{M}_\alpha\in\Proj(\bar{R}_\alpha\int G)$ for suitably large indices $\alpha$.

Write $(R_\alpha,I_\alpha)^h$ for the henselization of $R_\alpha$ along $I_\alpha\subseteq R_\alpha$, and note that $G$ naturally acts on $(R_\alpha,I_\alpha)^h$,
because henselization is functorial. It will suffice to lift the given 
$\bar{M}_\alpha\in\Proj(\bar{R}_\alpha\int G)$ to some element of $\Proj((R_\alpha,I_\alpha)^h\int G)$, because the inclusion $R_\alpha\subseteq R$
factors through $(R_\alpha,I_\alpha)^h$ $G$-equivariantly.

We next claim an inclusion of ($G$-invariant) ideals for all sufficiently large $M>>0$, namely
\begin{equation}\label{eq:nil_ideal}
I_\alpha^M\subseteq I^G_\alpha\cdot R_\alpha\subseteq I_\alpha\subseteq R_\alpha.
\end{equation}
Indeed, the second inclusion is obvious, and the first one follows from the 
fact that $I_\alpha$ is finitely generated together with the relation,
valid for every $x\in I_\alpha$:
\[ 0=\prod_{g\in G}(x-g(x))=:x^{|G|}+\sum\limits_{i=0}^{|G|-1}a_ix^i\mbox{ with }a_i\in I_\alpha\cap R_\alpha^G=I^G_\alpha,\]
which implies that $x^{|G|}\in I_\alpha^G\cdot R_\alpha$.

By \cref{eq:nil_ideal}, the kernel of the projection
\[ \xymatrix{ R_\alpha/I_\alpha^G\cdot R_\alpha\simeq R_\alpha^G/I_\alpha^G \otimes_{R_\alpha^G}R_\alpha \ar@{->>}[rr] & & R_\alpha/I_\alpha=\bar{R}_\alpha }\]
is nilpotent, and an easy calculation then shows that so is the kernel of the projection
\[ \xymatrix{ (R_\alpha/I_\alpha^G\cdot R_\alpha)\int G \ar@{->>}[rr] & & \bar{R}_\alpha\int G }.\]
(see the proof of Proposition \ref{prop:nil_inv}).

By \cite[ch. III, Corollary 2.4 and Proposition 2.12]{bass} then, we can lift the given
$\bar{M}_\alpha\in\Proj(\bar{R}_\alpha\int G)$ to some $\bar{M}'_\alpha\in
\Proj((R_\alpha/I_\alpha^G\cdot R_\alpha)\int G)$.

As a final piece of preparation, we need to see what happens to the $G$-invariants under
henselization. Since $R_\alpha^G\subseteq R_\alpha$ is integral, 
and \cref{eq:nil_ideal} shows that $\sqrt{I_\alpha^G\cdot R_\alpha}=
\sqrt{I_\alpha}$, 
\cite[TAG 0DYE]{stacks-project} implies that the canonical map
\begin{equation}\label{eq:base_hens}
(R_\alpha^G,I_\alpha^G)^h\otimes_{R_\alpha^G}R_\alpha\simeq(R_\alpha,I_\alpha)^h
\end{equation}
is an isomorphism.

We are now in a position to lift the given $\bar{M}'_\alpha\in\Proj((R_\alpha/I_\alpha^G\cdot R_\alpha)\int G)$ using
\cite[Theorem 4.1]{greco}, as follows\footnote{The application of this Theorem here is a bit involved because in general neither
is $R\int G$ an $R$-algebra in any obvious way (but only an $R^G$-algebra), nor is
$R^G\subseteq R$ finite.}:

As our henselian pair, we take $(R_\alpha^G,I_\alpha^G)^h$, and as our algebra we
take $A:=(R_\alpha,I_\alpha)^h\int G$: 
The algebra $A$ is a finite $(R^G_\alpha)^h$-module, because it is clearly finite over
$R_\alpha^h$, and \cref{eq:base_hens} shows that $R_\alpha^h$ is finite over
$(R_\alpha^G)^h$, because $R_\alpha^G\subseteq R_\alpha$ is finite, being both 
integral and of finite type.

We then compute the reduction of $A=(R_\alpha,I_\alpha)^h\int G$ to be
\[ \bar{A}:=A/I_\alpha^G\cdot A\simeq (R_\alpha/I_\alpha^G\cdot R_\alpha)\int G.\]
Now, \cite[Theorem 4.1]{greco} shows that the given $\bar{M}_\alpha'\in\Proj((R_\alpha/I_\alpha^G\cdot R_\alpha)\int G)=\Proj(\bar{A})$
lifts to some element of $\Proj(A)=\Proj((R_\alpha,I_\alpha)^h\int G)$, as desired.
\end{proof}


We can now start to work on the version of Proposition \ref{prop:excision_one} for connective
algebraic $K$-theory, at least for those diagrams \cref{eq:milnor} coming from suitable maps of henselian pairs: In the following,
fix henselian pairs $(R,I)$ and $(S,J)$ with an action of the finite group $G$, and assume that $(R,I)\longrightarrow (S,J)$ is a map of pairs which respects the $G$-action and maps $I$
isomorphically to $J$. Then 
\begin{equation}\label{eq:excision_milnor} \xymatrix{ R \ar[rr] \ar[dd] & & R/I\ar[dd]\\ & & \\ S \ar[rr] & & S/J}\end{equation}
is a diagram as in \cref{eq:milnor}, i.e. a Milnor-square with a $G$-action.

We then define $K(R\int G,I\int G)$ by the fiber sequence
\[ K\left(R\int G,I\int G\right)\longrightarrow K\left(R\int G\right)\longrightarrow K\left((R/I)\int G\right),\]
and analogously for $K(S\int G,J\int G)$ and with $K$ replaced by $\Kh$. The map of pairs $(R,I)\to (S,J)$ induces a map \[K\left(R\int G,I\int G\right)\longrightarrow K\left(S\int G,J\int G\right), \]
and similarly for $\Kh$. Recall that there is a 
canonical transformation $K\longrightarrow\Kh$.

\begin{proposition}\label{prop:pull_back}
In the above situation, the diagrams
\begin{itemize}
\item[i)]
\begin{equation}\label{eq:pull_back_con_non_con}
\xymatrix{ K\left(R\int G,I\int G\right) \ar[rr]\ar[dd] & &\ar[dd] K\left(S\int G,J\int G\right)\\ & & \\
\Kh\left(R\int G,I\int G\right) \ar[rr] & & \Kh\left(S\int G,J\int G\right)}
\end{equation}
and
\item[ii)]
\begin{equation}\label{eq:pull_back_con_non_con_inv}
\xymatrix{ \Kinv\left(R\int G,I\int G\right) \ar[rr]\ar[dd] & &\ar[dd] \Kinv\left(S\int G,J\int G\right)\\ & & \\
\Khinv\left(R\int G,I\int G\right) \ar[rr] & & \Khinv\left(S\int G,J\int G\right)}
\end{equation}
\end{itemize}
are pull-back squares.
\end{proposition}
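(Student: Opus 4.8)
The plan is to reduce everything to a single statement: that the relative $K$-theory and relative non-connective $K$-theory agree on the Milnor square \cref{eq:excision_milnor} after passing to twisted group rings, and that the same holds after taking fibers of the cyclotomic trace. The key observation is that the difference between $K$ and $\Kh$ is controlled by negative $K$-theory, and that the relative terms $K(R\int G, I\int G)$ are in fact connective with the same $\pi_0$ on both sides of the square.

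First I would recall that for any associative unital ring $A$ there is a fiber sequence relating $K(A)$ and $\Kh(A)$ whose fiber is a connective spectrum concentrated in nonpositive degrees (the negative $K$-groups sit in the relevant range), and more precisely that $\tau_{\ge 1}K(A)\simeq\tau_{\ge 1}\Kh(A)$ while $K_0(A)\hookrightarrow \Kh_0(A)$. Applying this to the four corners of \cref{eq:milnorG} associated to \cref{eq:excision_milnor}, and using that the square of non-connective $K$-theories is \emph{not} in general a pull-back but the obstruction is Land--Tamme's term, I would instead work directly with the relative objects. The relative spectrum $K(R\int G, I\int G)$ is the fiber of $K(R\int G)\to K((R/I)\int G)$; by Proposition \ref{prop:eq_rig_K0} the map on $K_0$ is an isomorphism, so this relative spectrum is $1$-connective. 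The same computation with $\Kh$ in place of $K$ shows $\Khinv$-relative term has $\pi_0$ equal to the relative $\Kh_0$, which by the same $K_0$-isomorphism (negative $K$-theory of $R\int G$ and $(R/I)\int G$ need not agree, but the relevant comparison is $\Kh_0(R\int G)\to\Kh_0((R/I)\int G)$) — here I would invoke that $\Kh_0 = K_0$ so Proposition \ref{prop:eq_rig_K0} gives the needed isomorphism on $\pi_0$ of the relative terms for $\Kh$ as well. Hence $K(R\int G, I\int G)\to\Kh(R\int G, I\int G)$ is an equivalence on $\tau_{\le 0}$ and on $\tau_{\ge 1}$, so it is an equivalence; likewise on the $S$-side. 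This makes \cref{eq:pull_back_con_non_con} a square both of whose vertical maps are equivalences, hence a pull-back, proving i).

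For ii), I would use that $\Kinv(R\int G, I\int G)$ is by definition the fiber of $K(R\int G, I\int G)\to TC(R\int G, I\int G)$ (the relative $TC$ being defined analogously), and similarly $\Khinv(R\int G, I\int G)$ is the fiber of $\Kh(R\int G, I\int G)\to TC(R\int G, I\int G)$ — the point being that the $TC$-term is the same in both cases since $TC$ does not distinguish connective from non-connective $K$-theory. Therefore the vertical maps in \cref{eq:pull_back_con_non_con_inv} are fibers of the map between the vertical maps in \cref{eq:pull_back_con_non_con} (with identical targets), and since the latter are equivalences by i), so are the former. Again the square has both vertical maps equivalences and is a fortiori a pull-back. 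An alternative, perhaps cleaner, phrasing: both squares in i) and ii) have the property that each is obtained from the other by taking fibers along the cyclotomic trace of a square with equal targets, so it suffices to prove i), and i) follows from the vertical equivalences just described.

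The main obstacle I anticipate is pinning down the connectivity bookkeeping precisely: one must be careful that ``the relative term is $1$-connective'' uses the $K_0$-isomorphism of Proposition \ref{prop:eq_rig_K0} (for the pair $(R,I)$ and for $(S,J)$, both legitimate henselian pairs with $G$-action), and that for the non-connective comparison one is genuinely comparing $K_0$'s and not stumbling into negative $K$-groups — i.e. one needs the relative $\Kh$-term to also have vanishing homotopy below degree $0$, which is \emph{not} automatic and is exactly where some care (or an appeal to \cite[Corollary 4.34]{CMM} in the equivariant form, or a direct argument using that $R\int G$ and $(R/I)\int G$ are related by a nilpotent-kernel-after-henselization argument as in the proof of Proposition \ref{prop:eq_rig_K0}) is required. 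In fact the safest route, which I would follow, is: the fiber of $K\to\Kh$ as a functor sends the Milnor square \cref{eq:milnorG} to a square, and one checks this fiber-square is a pull-back because the fiber functor $\mathrm{fib}(K\to\Kh)$ depends only on the category of finitely generated projective modules in a way that is insensitive to the square (it is built from $K_0$ and negative $K$-theory, and negative $K$-theory satisfies excision for Milnor squares by Land--Tamme / Bass), and then i) follows by the octahedral axiom from Proposition \ref{prop:excision_one}; ii) follows from i) by taking fibers of the cyclotomic trace, whose target $TC$-square is the same for connective and non-connective inputs.
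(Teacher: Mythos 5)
Your proposal correctly identifies the central comparison (the map from connective to non-connective relative $K$-theory, and that it suffices to compare the total fibers of the $K$- and $\Kh$-squares associated to the Milnor square), and your treatment of part ii) by taking fibers over the (common) $TC$-square matches the paper. But the core of part i) has a genuine gap. You assert that $K_0(R\int G)\cong K_0((R/I)\int G)$ (Proposition \ref{prop:eq_rig_K0}) already gives that $K(R\int G,I\int G)$ is $1$-connective; in fact $\pi_0$ of this fiber is $\mathrm{coker}(K_1(R\int G)\to K_1((R/I)\int G))$ once the $K_0$-map is injective, so one must also know that $K_1$ is surjective. The paper supplies this from the fact that $I\int G$ lies in the Jacobson radical of $R\int G$ (Bass, ch.\ IX, Prop.\ 1.3(1)); without it, $1$-connectivity is simply not established. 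The same is needed for the pair $(S,J)$ (which is henselian by \cite[Lemma 3.18]{CMM}, so Proposition \ref{prop:eq_rig_K0} applies there too).

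The more serious gap is in your ``safest route.'' You claim the square of $\mathrm{fib}(K\to\Kh)$'s is automatically a pull-back ``because negative $K$-theory satisfies excision for Milnor squares.'' Unwinding this, the total fiber of that square is $\mathrm{fib}(TK\to T\mathbb{K})$, where $TK$ and $T\mathbb{K}$ are the total fibers of the connective and non-connective $K$-squares. Milnor--Bass--Murthy excision gives $\pi_k T\mathbb{K}=0$ for $k\le -1$, which is only half the story: to conclude $\mathrm{fib}(TK\to T\mathbb{K})=0$ one also needs $\pi_k TK=0$ for $k\le -1$, and this is exactly where the henselian-pair input (Proposition \ref{prop:eq_rig_K0} plus $K_1$-surjectivity, applied to both $(R,I)$ and $(S,J)$) is required and cannot be dispensed with. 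Your first route---trying to show the vertical maps in \cref{eq:pull_back_con_non_con} are themselves equivalences---would require rigidity of negative $K$-theory across the henselian pair, which is strictly stronger than the proposition and is not established here; you flag this yourself but do not resolve it. Finally, the appeal to ``the octahedral axiom from Proposition \ref{prop:excision_one}'' does not parse: that proposition concerns $\Khinv$, not $\Kh$, and plays no role in the proof of part i). The paper's actual argument is: the map $F\to\mathbb{F}$ of horizontal fibers is an isomorphism on $\pi_{\ge 0}$ (formal, since $K\to\Kh$ is), $\pi_{\le -1}\mathbb{F}=0$ by Milnor--Bass--Murthy, and $\pi_{\le -1}F=0$ because both relative connective terms are $1$-connective by the two ingredients above.
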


\begin{proof} To prove part $i)$, let $F\longrightarrow \F$ denote the map induced by \cref{eq:pull_back_con_non_con} on horizontal fibres. The claim is that this map is an equivalence.
Since $K\longrightarrow\Kh$ induces an isomorphism on $\pi_k$ for $k\ge 0$, we have $\pi_k(\F)\simeq\pi_k(F)$ for $k\ge 0$. The excision theorem of Milnor-Bass-Murthy \cite[Chapter XII, Theorem 8.3]{bass} applied to the diagram obtained from \cref{eq:excision_milnor} by passing
to twisted group rings shows that $\pi_k(\F)=0$ for all $k\leq -1$. It remains to see that $\pi_k(F)=0$ in this range, too. Since $K_0(R\int G)\stackrel{\simeq}{\longrightarrow}K_0((R/I)\int G)$ is an isomorphism (Proposition \ref{prop:eq_rig_K0}) and the 
maps 
\[ K_1(R\int G)\longrightarrow K_1((R/I)\int G) \mbox{ and }\]
\[ K_1(S\int G)\longrightarrow K_1((S/J)\int G) \]
are surjections (this is true more generally for any surjective ring homomorphism
with kernel contained in the radical, see \cite[ch. IX, Proposition 1.3, (1)]{bass}), we conclude that
the fibers $K(R\int G,I\int G)$ and $K(S\int G,J\int G)$ are concentrated in degrees
$\ge 1$, and thus $F$ is concentrated in degrees $\ge 0$, as claimed.

Part $ii)$ follows from part $i)$ by passage to fibers over $TC$, because the canonical 
transformation $K\to TC$ factors as $K\to\Kh\to TC$.
\end{proof}

\begin{corollary}\label{cor:excision_final} If $(R,I)$ is a henselian pair with a
$G$-action and $R\to S$ is a map of commutative rings with $G$-action mapping
$I$ isomorphically to an ideal $J\subseteq S$, then 
\[ \Kinv(R\int G,I\int G)\stackrel{\simeq}{\longrightarrow} \Kinv(S\int G, J\int G)\]
is an equivalence.
\end{corollary}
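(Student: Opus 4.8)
The plan is to reduce the statement about the map $R \to S$ to the excision square \cref{eq:excision_milnor} and then apply Proposition \ref{prop:pull_back}. First I would observe that since $R \to S$ carries $I$ isomorphically onto $J$, the pair $(S,J)$ is automatically again a henselian pair: by Gabber's result recalled at the start of Section \ref{sec:pscoh}, being henselian depends only on the ideal viewed as a nonunital ring, and $I \cong J$ as nonunital rings (indeed $G$-equivariantly, since the map of pairs respects the $G$-action). Hence we are precisely in the situation set up before Proposition \ref{prop:pull_back}, and the square \cref{eq:excision_milnor} is a Milnor square with $G$-action.

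Next I would invoke Proposition \ref{prop:pull_back}, part ii), which tells us that the square \cref{eq:pull_back_con_non_con_inv} is a pull-back. Equivalently, the map on horizontal fibers
\[ \mathrm{fib}\bigl(\Kinv(R\int G, I\int G)\longrightarrow \Kinv(S\int G, J\int G)\bigr)\longrightarrow \mathrm{fib}\bigl(\Khinv(R\int G, I\int G)\longrightarrow \Khinv(S\int G, J\int G)\bigr) \]
is an equivalence. So it suffices to show the bottom horizontal map, namely $\Khinv(R\int G, I\int G) \to \Khinv(S\int G, J\int G)$, is an equivalence. This is where Proposition \ref{prop:excision_one} enters: applied to the Milnor square \cref{eq:milnorG} coming from \cref{eq:excision_milnor}, it says the square of $\Khinv$-values is a pull-back square, which is exactly the statement that the map of vertical fibers $\Khinv(R\int G, I\int G) \to \Khinv(S\int G, J\int G)$ is an equivalence (both being defined as the fiber of the reduction map, by the discussion preceding Proposition \ref{prop:pull_back}, extended to the non-connective setting). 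Combining the two equivalences gives that $\Kinv(R\int G, I\int G) \to \Kinv(S\int G, J\int G)$ is an equivalence.

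I do not anticipate a serious obstacle here: the corollary is essentially a formal consequence of the two preceding propositions once one notices that $(S,J)$ is henselian. The only point requiring a little care is making sure the non-connective relative term $\Khinv(R\int G, I\int G)$ is literally the vertical fiber in the Milnor square \cref{eq:milnorG1} — i.e., that the convention defining the relative $K$-theory (and $\Khinv$) spectrum as the fiber of the reduction map is compatible with reading off fibers from Proposition \ref{prop:excision_one} — but this is immediate from how these relative terms were introduced before Proposition \ref{prop:pull_back}. One should also remember to cite \cref{eq:milnorG} for the observation that passing to twisted group rings preserves Milnor squares.
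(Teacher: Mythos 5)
Your proposal is correct and follows essentially the same route as the paper: observe that $(S,J)$ is again henselian, use Proposition \ref{prop:pull_back}~ii) to reduce to the non-connective $\Khinv$-statement, and then apply Proposition \ref{prop:excision_one}. The only cosmetic difference is that you justify the henselianness of $(S,J)$ by Gabber's result on ideals as nonunital rings, while the paper cites \cite[Lemma 3.18]{CMM}; both are valid and record the same fact.
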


\begin{proof} The diagram 
\[ \xymatrix{ R \ar[rr] \ar[dd]& & R/I \ar[dd]\\ & & \\S \ar[rr] & & S/J}\]
is a Milnor-square with $G$-action. Note that the pair $(S,J)$ is also henselian by \cite[Lemma 3.18]{CMM}.
Therefore an application of Proposition \ref{prop:pull_back}, ii) 
reduces our claim to the analogous statement with $\Kinv$ replaced
with $\Khinv$. This is then a special case of Proposition \ref{prop:excision_one}.
\end{proof}

These results will be used to reduce rigidity of arbitrary pairs to rigidity of those 
pairs of the form $(\Z\ltimes I,I)$ already encountered in Section \ref{sec:pscoh}.

\begin{corollary}\label{cor:excision_final1}
For a fixed finite group $G$, there is an equivalence of spectra, functorial in the
henselian pair $(R,I)$ with $G$-action
\[ \Kinv((\Z\ltimes I)\int G, I\int G)\stackrel{\simeq}{\longrightarrow}\Kinv(R\int G,I\int G).\]
\end{corollary}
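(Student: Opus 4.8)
The plan is to realize the claimed equivalence as an instance of Corollary \ref{cor:excision_final}. Given a henselian pair $(R,I)$ with $G$-action, the non-unital $G$-equivariant ring $I$ determines, by the construction $I \mapsto \Z \ltimes I$, a unital ring with $G$-action (the $G$-action on the adjoined unit being trivial). There is a canonical $G$-equivariant ring homomorphism $\Z \ltimes I \longrightarrow R$ sending the copy of $I$ identically to $I \subseteq R$ and the unit to $1 \in R$. This map carries the ideal $I \subseteq \Z \ltimes I$ isomorphically onto the ideal $I \subseteq R$. Moreover $(\Z \ltimes I, I)$ is a henselian pair: this is exactly the statement that $I$, regarded as a non-unital ring, is henselian in the sense recalled at the beginning of Section \ref{sec:pscoh}, together with Gabber's theorem that henselianness of a pair depends only on the ideal. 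Hence the hypotheses of Corollary \ref{cor:excision_final} are met with $S := R$, $J := I$, and the map of pairs $(\Z \ltimes I, I) \to (R, I)$, yielding the asserted equivalence
\[ \Kinv((\Z\ltimes I)\int G, I\int G)\stackrel{\simeq}{\longrightarrow}\Kinv(R\int G,I\int G). \]

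The remaining point is \emph{functoriality} in $(R,I)$. First I would note that $I \mapsto \Z \ltimes I$ is a functor from $\RingG$ (or from $G$-equivariant non-unital rings with henselian underlying non-unital ring) to unital $G$-rings, and that the augmentation-type map $\Z \ltimes I \to R$ is natural in the pair $(R,I)$: a morphism of pairs $(R,I) \to (R',I')$ restricts to a $G$-equivariant map $I \to I'$, inducing $\Z \ltimes I \to \Z \ltimes I'$ compatibly with the maps to $R$, $R'$. Passing to twisted group rings and then to the relative $\Kinv$-terms, everything in sight is functorial, so the equivalence above is a natural transformation of functors on the category of henselian pairs with $G$-action; since each component is an equivalence, it is an equivalence of functors. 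One should be slightly careful that $\Kinv(-\int G, -\int G)$ is genuinely functorial in the pair — but this is immediate from the defining fiber sequence, since both $\Kinv((-)\int G)$ and the reduction map to $\Kinv(\Z[G])$ (via the $G$-equivariant augmentation, or rather via the structure maps) are functorial.

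The main obstacle, such as it is, is bookkeeping rather than mathematics: one must make sure the map $\Z \ltimes I \to R$ is the \emph{same} map (up to coherent natural isomorphism) that appears implicitly in the relative terms, i.e.\ that the fiber sequence defining $\Kinv((\Z\ltimes I)\int G, I\int G)$ in Section \ref{sec:pscoh} is compatible with the one defining $\Kinv(R\int G, I\int G)$ in Section \ref{sec:technicalities} under the transition map. Concretely, one checks that the composite $(\Z\ltimes I)\int G \to R\int G \to (R/I)\int G$ agrees with the augmentation $(\Z\ltimes I)\int G \to \Z[G] \hookrightarrow (R/I)\int G$ only \emph{after} further projection — but in fact the cleanest route is simply to observe that both relative terms are, by construction, the fibers of reduction maps along the quotients by $I \int G$, and the map $\Z\ltimes I \to R$ is compatible with those quotients by the very definition of a morphism of pairs. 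I would therefore phrase the proof as: apply Corollary \ref{cor:excision_final} to the natural map of pairs $(\Z\ltimes I, I) \to (R,I)$, and observe that all constructions involved are functorial in $(R,I)$, so that the resulting equivalence is functorial as claimed.
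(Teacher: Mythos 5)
Your proof is correct and is exactly the paper's intended argument: the paper supplies no explicit proof because the statement follows at once from Corollary \ref{cor:excision_final} applied to the natural $G$-equivariant map of henselian pairs $(\Z\ltimes I, I)\to(R,I)$, and that is precisely what you do. The observations about $(\Z\ltimes I, I)$ being henselian (via Gabber), about the map carrying $I$ isomorphically onto $I\subseteq R$, and about functoriality are all correct and in line with what the paper takes for granted.
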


\subsection{Exactness}

We call a sequence $I'\to I\to\bar{I}$ in $\RingG$ {\em short exact} if it is so when 
considered non-equivariantly, i.e. in $\Ring$, i.e. if the underlying sequence
of abelian groups is short exact (see \cite[Definition 3.4]{CMM}).
We consider the functor
\[ F:\RingG\longrightarrow\Sp,\, F(I):=\Kinv((\Z\ltimes I)\int G, I\int G),\]
and claim that it is exact:

\begin{proposition}\label{prop:exactness}
Given a short exact sequence $I'\to I\to\bar{I}$ in $\RingG$,
then $F(I')\to F(I)\to F(\bar{I})$ is a fiber sequence.
\end{proposition}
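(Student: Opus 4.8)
The plan is to reduce this to a statement about a Milnor square by realizing the short exact sequence $I' \to I \to \bar I$ in $\RingG$ as the kernel-cokernel sequence of a map of henselian pairs, and then invoke the excision results of Section~\ref{sec:technicalities}. Concretely, from $I' \to I \to \bar I$ one forms the unital rings with $G$-action $\Z \ltimes I'$, $\Z \ltimes I$, $\Z \ltimes \bar I$, and one has a commutative square
\begin{equation*}
\xymatrix{ \Z \ltimes I \ar[rr] \ar[dd] & & \Z \ltimes \bar I \ar[dd] \\ & & \\ (\Z \ltimes I)/I' \ar[rr] & & \Z \ltimes \bar I}
\end{equation*}
in which the left vertical map is the quotient by the ideal $I' \subseteq I \subseteq \Z \ltimes I$, and the bottom map is an isomorphism because $I/I' \simeq \bar I$. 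The key point is that $(\Z \ltimes I, I')$ is a henselian pair with $G$-action: since $I$ is a non-unital henselian $\Z$-algebra, the pair $(\Z \ltimes I, I)$ is henselian, and $(\Z \ltimes I, I')$ is henselian because $I'$ is an ideal in a henselian pair's ideal — more precisely one checks directly from the definition of non-unital henselian algebra that a short exact sequence of non-unital rings with middle term henselian forces the kernel $I'$ to define a henselian pair inside $\Z \ltimes I$ (this is \cite[Lemma 3.18]{CMM} type reasoning; $I'$ is an ideal of the henselian ring-with-ideal $I$).

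Granting that, the square above is a Milnor square with $G$-action of exactly the shape \cref{eq:excision_milnor}, with $(R,I) = (\Z \ltimes I, I')$ mapping to $(S,J) = (\Z \ltimes \bar I, \text{(zero ideal? no—)})$; one must instead argue as follows. First, by Corollary~\ref{cor:excision_final1} (functoriality), $F(I) = \Kinv((\Z\ltimes I)\int G, I \int G) \simeq \Kinv((\Z \ltimes I)\int G)$-relative-to-$\Z[G]$, so the fiber sequence $F(I') \to F(I) \to F(\bar I)$ is equivalent to asking that
\[ \Kinv((\Z \ltimes I')\int G, I'\int G) \longrightarrow \Kinv((\Z \ltimes I)\int G, I' \int G) \]
(the second term relative to $\Z[G]$ via the first map, computed after modding out $I'$) together with the identification $(\Z\ltimes I)/I' \simeq \Z \ltimes \bar I$ assemble correctly. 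The cleanest route: apply Corollary~\ref{cor:excision_final} to the map of henselian pairs $(\Z\ltimes I', I') \to (\Z\ltimes I, I')$ — wait, $I'$ is not an ideal of $\Z \ltimes I'$ mapping isomorphically — so instead apply it to $(\Z \ltimes I', I') \hookrightarrow (\Z \ltimes I, I')$, which does map $I'$ isomorphically to the ideal $I' \subseteq \Z \ltimes I$, giving $F(I') \simeq \Kinv((\Z\ltimes I)\int G, I'\int G)$. Then the defining fiber sequence for the relative term, $\Kinv((\Z\ltimes I)\int G, I'\int G) \to \Kinv((\Z \ltimes I)\int G) \to \Kinv(((\Z\ltimes I)/I')\int G)$, combined with $(\Z \ltimes I)/I' \simeq \Z \ltimes \bar I$ and again Corollary~\ref{cor:excision_final}/\ref{cor:excision_final1}, rewrites the last term as $\Kinv(\Z[G])$-relative version and produces $F(\bar I)$ as the cofiber of $F(I') \to F(I)$ by the octahedral axiom applied to the tower $\Z[G] \to (\Z \ltimes \bar I)\int G$, $(\Z \ltimes I)\int G \to (\Z\ltimes\bar I)\int G$.

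The main obstacle I anticipate is bookkeeping: making sure that all the "relative to $\Z[G]$" terms are taken along compatible augmentations so that the octahedral/Verdier argument genuinely applies, and in particular verifying that $(\Z\ltimes I, I')$ really is a henselian pair (this is where the hypothesis that the sequence is short exact in the underlying abelian groups, hence $I'$ is honestly an ideal of $I$, gets used — one invokes that henselianness of a non-unital algebra passes to ideals, which should follow from the polynomial lifting characterization since a root in $I$ lying in the ideal $I'$ can be detected, using that $f(t) = t(1+t)^n + g(t)$ with $g \in I'[t]$ has its unique root in $I'$ because the root is $\equiv 0$ and the recursion stays in $I'$). Once these identifications are in place the fiber sequence is formal from the associated-graded of the filtration $0 \subseteq I' \subseteq I$ together with $\Kinv(-\int G, -\int G)$ sending the two relevant quotients to $F(\bar I)$ and the sub to $F(I')$, all via excision.
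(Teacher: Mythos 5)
Despite some initial false starts with a wrong-shaped Milnor square (which you recognize mid-sentence), your proposal converges on the paper's argument: apply Corollary \ref{cor:excision_final} to the map of pairs $(\Z\ltimes I', I')\to(\Z\ltimes I, I')$ to obtain $F(I')\simeq\Kinv((\Z\ltimes I)\int G, I'\int G)$, use $(\Z\ltimes I)/I'\simeq\Z\ltimes\bar I$, and assemble the resulting fiber sequences (you via the octahedral axiom, the paper via an explicit $3\times 3$ diagram over the constant term $\Kinv(\Z[G])$ -- these are equivalent). One caveat worth flagging: your concern about whether $(\Z\ltimes I, I')$ is a henselian pair is unnecessary, since Corollary \ref{cor:excision_final} only requires the \emph{source} pair to be henselian, which holds automatically because $I'\in\RingG$ is by definition a non-unital henselian algebra; henselianness of the target follows from \cite[Lemma 3.18]{CMM}, as the proof of that corollary already notes, so you do not need to re-derive it from the polynomial lifting characterization.
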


\begin{proof}
We consider the following commutative diagram
\[ \xymatrix{ F(I') \ar[dd] \ar[rr] & & F(I) \ar[dd] \ar[rr] & & F(\bar{I}) \ar[dd]\\
& & & & \\ F(I') \ar[rr] & & \Kinv((\Z\ltimes I)\int G) \ar[rr] \ar[dd] & & \ar[dd]\Kinv((\Z\ltimes \bar{I})\int G)\\
& & & & \\ & & \Kinv(\Z[G]) \ar[rr]^= && \Kinv(\Z[G]).}\]

The top row is the one we want to recognize as a fiber sequence. The two
right colomns are the fiber sequences defining $F(I)$ and $F(\bar{I})$. The indicated equality
implies that the upper right square is a pullback. Hence the top row is a fiber sequence if and only if
so is the second row. We verify this by observing that using Corollary \ref{cor:excision_final}
for the obvious map of pairs with $G$-action $(\Z\ltimes I',I')\to (\Z\ltimes I, I')$ gives
\[ F(I')\stackrel{def}{=}\Kinv((\Z\ltimes I')\int G, I'\int G)\stackrel{\simeq}{\longrightarrow}
\Kinv((\Z\ltimes I)\int G, I'\int G)\]
\[ \stackrel{def}{=}\mathrm{fiber}\left(\Kinv((\Z\ltimes I)\int G)\longrightarrow \Kinv(((\Z\ltimes I)/I')\int G)\right).\]
Since $(\Z\ltimes I)/I'\simeq \Z\ltimes\bar{I}$, this concludes the proof.
\end{proof}

\section{The proof of the main result}\label{sec:proof_of_main}

In this section, we give the proof of our main result, Theorem \ref{thm:main}, which we restate 
for convenience.

\begin{theorem}
If the finite group $G$ acts on the henselian pair $(R,I)$, $|G|\in R^*$, and $n\ge 1$ is an integer coprime to $|G|$, then the reduction map
\[ \Kinv(R\int G)/n\stackrel{\simeq}{\longrightarrow} \Kinv((R/I)\int G)/n\]
is an equivalence.
\end{theorem}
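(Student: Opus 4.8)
The plan is to reduce the assertion, via the relative terms and the finiteness (pseudocoherence) results of Section \ref{sec:pscoh}, to the geometric special case Proposition \ref{prop:geometric_rigidity}. First I would dispose of the ambient ring: by Corollary \ref{cor:excision_final1} the relative term $\Kinv((\Z\ltimes I)\int G, I\int G)$ computes $\Kinv(R\int G, I\int G)$ functorially, so it suffices to show that $\Kinv((\Z\ltimes I)\int G, I\int G)/n$ vanishes for every henselian $I\in\RingG$ (with the relevant invertibility hypotheses in force). Writing $n$ as a product of prime powers and using that a spectrum is $n$-divisible-torsion-acyclic iff it is so after smashing with each $S/p$, we reduce to showing that $F(I)/p := \Kinv((\Z\ltimes I)\int G, I\int G)/p$ vanishes for each prime $p$ dividing $n$; note each such $p$ is coprime to $|G|$, and $p\cdot|G|$ is invertible in $\Z\ltimes I$ since we may invert $|G|$ throughout.

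Next I would exploit exactness (Proposition \ref{prop:exactness}) together with pseudocoherence (Proposition \ref{prop:G_pscoh_final}). The functor $F(-)/p$ is $G$-pscoh and exact on short exact sequences in $\RingG$. The standard Clausen--Mathew--Morrow argument then says: to prove $F(-)/p$ vanishes on all henselian $I\in\RingG$, it is enough to prove it vanishes on the compact projective generators $F''_{\Z[1/|G|]}(n)$ — more precisely, one uses a continuity/left-Kan-extension argument (as in \cite[Section 4]{CMM}) showing that a $G$-pscoh exact functor is determined by its values on $\left(\RingG\right)_\Sigma$, and every henselian algebra is a sifted/filtered colimit built from these, reducing the vanishing to the generators. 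Combining this with Proposition \ref{prop:equivarint_gabber_colim}, which identifies $\mathrm{colim}_{(N,M)=1}F''_{\Z[1/M]}(n)\simeq F''_\Q(n)$ along the multiplication-by-$N$ maps $[N]$, reduces the whole problem to a single statement: the map $[N]$ induces a suitably nilpotent (mod $p$) map on $F(-)/p$ applied to $F''_{\Z[1/|G|]}(n)$ after passing through the rational point — this is where the geometric case enters.

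Concretely, for the base ring $k=\Q$ (or any field of characteristic coprime to $|G|$) and the smooth affine $G$-variety $X=\mathbb{A}^{n\cdot|G|}$ carrying $n$ copies of the regular representation, with $G$-fixed rational point the origin $x=0$, the henselization $\sO^h_{X,x}$ is exactly the unital algebra $\Z[1/|G|]\ltimes F''_{\Z[1/|G|]}(n)$ (base-changed to $k$, and after henselization rationally it is $\Q\ltimes F''_\Q(n)$), and Proposition \ref{prop:geometric_rigidity} gives that $\Kinv(\sO^h_{X,x}\int G)/p\to\Kinv(k[G])/p$ is an equivalence, i.e. the relative term $F(F''_k(n))/p$ vanishes. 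Feeding this back up through the colimit identification and the pseudocoherence/continuity machinery yields $F(I)/p=0$ for all henselian $I\in\RingG$, hence Theorem \ref{thm:main}.

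\textbf{Main obstacle.} The technical heart is the reduction "$G$-pscoh + exact + vanishing on generators $\Rightarrow$ vanishing everywhere", i.e. faithfully transporting the CMM continuity argument (their use of \cite[Propositions 4.8--4.13]{CMM} and the structure of $\left(\RingG\right)_\Sigma$ as the idempotent completion of the $F''(n)$) to the equivariant category, and correctly combining it with the Gabber-type colimit of Proposition \ref{prop:equivarint_gabber_colim} so that the geometric input over $\Q$ suffices to kill the mod-$p$ relative term for $p$ possibly equal to the residue characteristic. Everything else — reducing to primes, reducing to the standard pair via excision, identifying the free generators with a henselization of affine space — is formal given the results already assembled.
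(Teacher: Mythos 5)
Your high-level strategy matches the paper's: reduce to a prime $p$ coprime to $|G|$, replace the relative term by $F(I) := \Kinv((\Z\ltimes I)\wr G, I\wr G)/p$ via Corollary \ref{cor:excision_final1}, and then combine $G$-pseudocoherence, exactness, and the geometric input of Proposition \ref{prop:geometric_rigidity}. However, there is a genuine gap at the step you describe as ``feeding this back up through the colimit identification and the pseudocoherence/continuity machinery''. Geometric rigidity shows $F$ vanishes on the free generators $F''_\Omega(n)$ only when the base $\Omega$ is a \emph{field} of characteristic coprime to $|G|$; it does \emph{not} give vanishing on the mixed-characteristic generators $F''_{\Z[1/|G|]}(n)$, which is what one would need to conclude directly by left Kan extension. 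The Gabber-type colimit of Proposition \ref{prop:equivarint_gabber_colim} identifies $F''_\Q(n)$ with a filtered colimit of $F''_{\Z[1/|G|]}(n)$ along the maps $[N]$; vanishing of the colimit then only tells you that each class in $\pi_* F(F''_{\Z[1/|G|]}(n))$ is killed by some $[N]$, not that the groups themselves vanish. The actual argument bridges this via a boot-strap that your proposal omits: first, using exactness, nil-invariance, and the case $\Omega=\F_q$, one shows that $F(I)=0$ whenever $NI=0$ for some $N$ coprime to $|G|$; then, taking $F_0 = \pi_d F$ for the smallest $d$ with $\pi_d F$ possibly nonzero, one uses finite generation of $F_0$ and the $\Q$-colimit to extract a single $N$ for which $F_0(NI)\to F_0(I)$ is the zero map for all $I$, and concludes $F_0(I)=0$ from the exact sequence $F_0(NI)\to F_0(I)\to F_0(I/NI)$ together with the torsion case. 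Your phrase ``the map $[N]$ induces a suitably nilpotent map'' gestures in this direction but is not a substitute for the actual mechanism, which uses both prime fields $\F_q$ and $\Q$, the finite generation furnished by pseudocoherence, and an induction on connectivity.

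There is also a minor but telling slip: you write that ``$p\cdot|G|$ is invertible in $\Z\ltimes I$''. The hypotheses only put $|G|\in R^*$ and $(p,|G|)=1$; there is no hypothesis that $p$ is invertible, and indeed the whole point of working with $\Kinv$ rather than $K$ is that the result holds even when $p$ equals the residue characteristic. If $p$ were invertible throughout, the theorem would already follow from Gabber rigidity for $K$ (as in Corollary \ref{cor:main}), and the $TC$ machinery would be superfluous. The statement is harmless if you never use it, but it suggests a misunderstanding of where the real content lies.
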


\begin{proof}
We can assume that $n=p$ is a prime (not dividing $|G|$).
Since $\Kinv(R\int G,I\int G)/p\simeq \Kinv((\Z\ltimes I)\int G, I\int G)/p$ (see Corollary \ref{cor:excision_final1}),
our claim is that the functor
\[ F:\RingG_{\Z\left[\frac{1}{|G|}\right]}\longrightarrow\Sp, \, F(I):=\Kinv((\Z\ltimes I)\int G,I\int G)/p\]
is trivial. We start by collecting properties of $F$ that were established previously:
By Proposition \ref{prop:G_pscoh_final}, 
\begin{equation}\label{eq:pscoh} F \mbox{ is } G\mbox{-pscoh.}
\end{equation}
By Proposition \ref{prop:exactness},
\begin{equation}\label{eq:exact} F \mbox{ sends short exact sequences to fiber sequences}
\end{equation}
and by Proposition \ref{prop:nil_inv}, 
\begin{equation}\label{eq:nil} F \mbox{ vanishes on nilpotent arguments.} 
\end{equation}

For every prime field $\Omega$
of characteristic not dividing $|G|$\footnote{by convention, this is fulfilled for characteristic zero.}
recall the compact projective generators $F''_\Omega(n)\in\RingG_\Omega$ $(n\ge 0)$ from
Proposition \ref{prop:equivariant_generators}. We deduce from Proposition \ref{prop:geometric_rigidity}
\footnote{This is the step which forces us to assume that $p$ does not divide $|G|$, and that the characteristic of $\Omega$ does not divide $|G|$.}
that $F(F''_\Omega(n))=0$ for all $n\ge 0$. Since by \cref{eq:pscoh}, the restriction
of $F$ to $\RingG_\Omega$ is $G$-pscoh, and in particular left Kan extended from its
subcategory of compact projective objects (see \cite[Lemma 4.6]{CMM}), which in turn is the idempotent completion of all the 
$F''_\Omega(n)$, we see that for every prime field $\Omega$ of characteristic not dividing $|G|$, we have
\begin{equation}\label{eq:vanish_field} F(\RingG_\Omega)=0.
\end{equation}

We now boot-strap to see that for every $N\ge 1$, $I\in \RingG_{\Z\left[\frac{1}{|G|}\right]}$:
\begin{equation}\label{eq:vanish_torsion} \mbox{ if } (N,|G|)=1\mbox{ and } NI=0, \mbox{ then } F(I)=0.
\end{equation}

Since $F$ preserves finite products, we can assume that $N=q^r$ is a prime-power
(with the prime $q$ not dividing $|G|$) and then consideration of the short exact sequence $qI\to I\to I/qI$
together with \cref{eq:exact}, \cref{eq:nil} and \cref{eq:vanish_field} (for $\Omega=\mathbb{F}_q$) proves \cref{eq:vanish_torsion}.

Since $F$ is bounded below, there is an integer $d\in\Z$ such that
\begin{equation}\label{eq:minimality} \pi_k F =0, \mbox{ for every } k<d.
\end{equation}

We will be done if we can show that the functor to abelian groups
$F_0:=\pi_d F:\RingG_{\Z\left[\frac{1}{|G|}\right]}\longrightarrow\mathrm{Ab}$
vanishes, because $d$ being arbitrary, this will imply that $F=0$.
To see this, we will establish the following:
\begin{equation}\label{eq:universal_torsion} \mbox{ There is some } N \mbox{ coprime to } |G|\mbox{ such that for all } I\in\RingG_{\Z\left[\frac{1}{|G|}\right]},\end{equation}
\begin{equation*}F_0(NI)\longrightarrow F_0(I)\mbox{ is the zero map.}\end{equation*}

Given this, using \cref{eq:exact} and \cref{eq:minimality}, we obtain an exact sequence
\[ F_0(NI)\stackrel{0}{\longrightarrow} F_0(I)\longrightarrow F_0(I/NI)\longrightarrow 0,\]
where $F_0(I/NI)=0$ by \cref{eq:vanish_torsion},
hence $F_0(I)=0$.

To prove \cref{eq:universal_torsion}, we recall (Proposition \ref{prop:equivarint_gabber_colim}) that we have a relation between free objects
\[F''_{\mathbb{Q}}(n)=\mathrm{colim}_{(N,|G|)=1} F''_{\Z\left[\frac{1}{|G|}\right]}(n).\]

Since $F_0(F''_{\Q}(n))=0$ by \cref{eq:vanish_field} for $\Omega=\mathbb{Q}$
and $F_0$ commutes with filtered colimits, we deduce that for every $x\in F_0({F''_{\Z\left[\frac{1}{|G|}\right]}(n)})$ there is some $N$ coprime to $|G|$ 
(depending on $x$ and $n$) such that $[N](x)=0$. To deduce from this the more uniform
statement \cref{eq:universal_torsion} one uses that $F_0$ is finitely generated and takes
the product of all $N$'s for the generators. Since the details of this step are literally the same
as in the proof of \cite[Lemma 4.16]{CMM}, we omit them here.
\end{proof}

\bibliography{equivariant_rigidity}
\bibliographystyle{alpha}

\end{document}